\DeclarePairedDelimiterX\Set[2]{\lbrace}{\rbrace}%
 { #1 \,\delimsize:\, #2 }
\theoremstyle{definition}
\newcommand{\defeq}
{\stackrel{\text{def}}{=}}
\theoremstyle{plain}
\newtheorem{thm}{Theorem}
\newtheorem{prop}[thm]{Proposition}
\newtheorem*{prop*}{Proposition}
\newtheorem*{seged*}{Sublemma}
\newtheorem{cor}[thm]{Corollary}
\newtheorem{lem}[thm]{Lemma}
\newtheorem{conj}[thm]{Conjecture}
\newtheorem*{lem*}{Lemma}
\theoremstyle{definition}
\newtheorem*{defn*}{Definition}
\newtheorem{fel*}[thm]{Exercise}
\newtheorem*{megf*}{Observation}
\theoremstyle{remark}
\newtheorem{rem}[thm]{Remark}
\newtheorem*{rem*}{Remark}
\newenvironment{sbiz}{\par\noindent{\itshape Proof:}\ }{\newmoon}
\newenvironment{nbiz}{\par\noindent{\itshape Proof:}\ }{}
\title{Edmonds' Branching Theorem in Digraphs without Forward-infinite Paths}
\author{Attila Joó\thanks{MTA-ELTE 
Egerváry 
Research Group, 
Department of Operations Research, Eötvös Loránd University, 
Budapest, Hungary. 
E-mail: {\tt 
joapaat@cs.elte.hu}.}}
\date{2014}
\begin{document}

\maketitle
This is the peer reviewed version of the following article: \cite{joo2015edmonds}, which has been published in final form at 
\url{http://dx.doi.org/10.1002/jgt.22001}. This article may be used for non-commercial purposes in accordance with Wiley Terms and 
Conditions for Self-Archiving.

\begin{abstract}
  Let $ D $ be a finite digraph, and let $ V_0,\dots,V_{k-1} $ be 
  nonempty subsets of $ V(D) $. The (strong form of) Edmonds' branching theorem states that
   there are pairwise edge-disjoint spanning branchings $ \mathcal{B}_0,\dots, \mathcal{B}_{k-1} $ in $ D $ such that the 
   root set of $ \mathcal{B}_i $ is $ V_i\ (i=0,\dots,k-1) $ if and only if for all  $ \varnothing \neq X\subseteq V(D) $ the number of 
   ingoing 
   edges 
   of $ X $ is greater than or equal to the number of sets $ V_i $ disjoint from $ X $. 
  As was shown by R. Aharoni and C. Thomassen in \cite{aharoni1989infinite}, 
this theorem does not remain true for infinite 
digraphs. Thomassen also proved that for the class of digraphs without backward-infinite paths, the above theorem of Edmonds  
remains true. Our main result is that for digraphs without forward-infinite paths, Edmonds' branching theorem remains true as 
well.

\end{abstract}
\section{Notions and notation}
The digraphs $ D=(V,A) $ considered here may have multiple edges and arbitrary size. Loops 
are also allowed but are irrelevant to our subject.   
If $ B\subseteq V $, then we write $D[B] $ for the subgraph of $ D $ spanned by $ B $.  
For  $ X \subseteq V $ let  $ \mathsf{in}_{D}(X)$ and $\mathsf{out}_{D}(X) $ be the set of ingoing and  
outgoing edges respectively of $ X $ in $ D $, 
and let $ \varrho_{D}(X),\  \delta_{D}(X) $ be their respective cardinalities. By a path, we mean a directed, possibly 
infinite, simple path  (the
repetition of vertices is not allowed). We denote by $ \mathsf{start}(P) $ and $ \mathsf{end}(P) $  the first and last vertex of 
the 
path 
$ P $, if they exist. For an edge $ e $ from $ x $ to $ y $, let $ \mathsf{start}(e)=x $ and $ \mathsf{end}(e)=y $. For $ X,Y 
\subseteq V $,  let $ 
\mathsf{e}_{D}(X,Y)=\{ e\in A: \mathsf{start}(e)\in X,\ 
\mathsf{end}(e)\in Y \} $; for singletons we write $ \mathsf{e}(x,y) $ instead of $ \mathsf{e}(\{ x \},\{ y \}) $. We say that 
the path $ P $  \textit{goes from} $ X $ \textit{to} $ Y $ if $ V(P)\cap X=\{ \mathsf{start}(P) \} $ 
and $ 
V(P)\cap Y=\{ \mathsf{end}(P) \}\  (\mathsf{start}(P)=\mathsf{end}(P)  \text{ is allowed})$. We 
call  $ \min \{ \varrho_{D}(X): \varnothing\neq X\subseteq V\setminus\{ r \} \} $  the  edge-connectivity of $ D 
$ from $ r $, and  $ D $ is $ \kappa $-edge-connected from $ r $ if this cardinal is at least $ \kappa $. 

 A digraph is an \textit{arborescence} with root vertex $ r $ if it is a directed tree such that all vertices are 
reachable from $ r $. A digraph is a \textit{branching} with root set $ W $ if its weakly connected components are 
arborescences and the 
vertex set $ W $ 
consists of the roots of these arborescences. $ \mathcal{B} $ is a \textit{k-branching} in $ D $ iff it is a $ k 
$-tuple $ 
\mathcal{B}=(\mathcal{B}_0,\mathcal{B}_1,\dots,\mathcal{B}_{k-1}) $ such that the $ \mathcal{B}_i=(V_i,A_i) $'s are 
edge-disjoint 
branchings in $ D $ (not necessarily with the same root sets), and we let $ \mathit{D\bbslash \mathcal{B}}=(V, A 
\setminus\cup_{i< 
 k}A_i) $. If $ F 
 $ is a branching  and $ P $ is a path such that $ V(F)\cap V(P)=\{ \mathsf{start}(P) \} $, then we denote by $ \mathit{F+P} $ 
 the branching $ (V(F)\cup V(P),A(F)\cup A(P)) $.  
\section{Introduction}
Edmonds proved in \cite{edmonds1973edge} his famous theorem (now called the \textit{weak form of Edmods' branching theorem}) which states 
that if a finite  digraph is $ k 
$-edge-connected from a vertex $ r $ for some $ k\in \mathbb{N} $, then it has $ k $ edge-disjoint spanning arborescences rooted 
at $ r $. 
He also proved a 
generalization of this (called the \textit{strong form of Edmods' branching theorem}; see \cite{frank2011connections} p. $ 349 $ Theorem 
$ 10.2.1 $) 
which states the following.  If $ D $ is a finite digraph and $ V_0,\dots,V_{k-1}\subseteq V(D) $, 
  then there are pairwise edge-disjoint spanning branchings $ \mathcal{B}_0,\dots, \mathcal{B}_{k-1} $ in $ D $ such that the root set of 
  $ 
  \mathcal{B}_i $ is $ V_i\ (i=0,\dots,k-1) $ if and only if  all  $ \varnothing \neq X\subseteq V(D) $ has at
  least $ \left|\{i<k: V_i\cap X=\varnothing \}\right| $ ingoing edges. L. Lovász gave a new 
elegant proof for Edmods' branching theorem in \cite{lovasz1976two}, and his techniques opened the door for 
further generalizations 
such as 
\cite{kamiyama2009arc}, \cite{fujishige2010note}, \cite{berczi2009packing}  and \cite{kiraly2013maximal}.
Infinite generalizations have been obstructed by a negative result of R. Aharoni and C. Thomassen \cite{aharoni1989infinite}. 
 They constructed, for any $ k\in \mathbb{N} $,  a countably-infinite, locally finite, simple graph $ G $ such that $ 
G $ has a $ k $-connected orientation but has  vertices $ u,v $ such that deleting the edges of an arbitrary path between $ u 
$ and $ v $ makes the remaining graph disconnected.

Thomassen showed (unpublished) that if $ D=(V,A) $ does not contain backward-infinite paths and  is  $ k $-edge-connected from 
$ r $ for some $ k\in \mathbb{N} $, then it has $ k 
$ edge-disjoint spanning arborescences rooted at $ r $. The main idea of his proof is the following: construct first a 
spanning 
subgraph $ D'=(V,A') $ of $ D $ such that $ D' $ is also $ k $-edge-connected from $ r $ and all vertices of $ D' $ have finite 
indegrees. 
After that, one
can build the desired arborescences in $ D' $ using the finite version of the theorem and compactness arguments. Thomassen's 
proof also works for the strong form of the Edmonds' branching theorem. Our main result is that 
 disallowance of  forward-infinite paths instead of backward-infinite paths is also sufficient. Our proof   
uses techniques very different from  Thomassen's proof.

There is a general approach in finite combinatorics based on separating by ``tight'' sets to smaller subproblems and handling of these 
by induction independently. This approach works for example for Menger's theorem and for Edmonds' branching theorem  but obviously 
can not be used directly  to infinite generalizations because it is possible that the subproblems have the same size as the original. 
Even so we will define the notion of ``tightness''  in the context of Edmonds' branching theorem and it will play key role in our proof. 
An other proof for the finite case 
given by Lovász in \cite{lovasz1976two} makes it possible (even without the restriction about infinite paths) to create edge-disjoint 
branchings with the prescribed root sets where all of them have infinitely many vertices. Unfortunately using Lovász's approach we can 
not guarantee that the resulting branchings will be spanning branching (not even in the countable case) because we can not control that 
which vertex  do we extend a branching with. This controllability will be essential in our proof to ensure conditions after limit steps 
in our recursive construction. 

\section{Main result}
In this section,  we  state and prove our main result. Instead of packing branchings with prescribed root sets, we formulate 
this result
in a formally more general (but in fact equivalent) form, in which we want to extend some initial edge-disjoint branchings to 
edge-disjoint spanning branchings without 
changing their root sets. If these initial branchings have no edges, then we get back the ``prescribed root sets''-approach.
   
\begin{thm}\label{főtétel}
Let  $ D=(V,A) $ be a digraph,   $ k\in \mathbb{N} $  and $ 
\mathcal{B}_i=(V_i,A_i)\ (  i<k) $ 
 edge-disjoint branchings in  $ D $  and let $D\bbslash \mathcal{B}=(V, A \setminus\cup_{i< 
 k}A_i) $. Suppose that $ D\bbslash \mathcal{B} $ does not contain 
 forward-infinite paths. Then the branchings can be 
extended to edge-disjoint spanning branchings of $ D $ without changing their root sets if and only if 

\begin{equation}\label{egy feltetel}
\forall X\  (\varnothing \neq X \subseteq V\Longrightarrow  
 \varrho_{D\bbslash \mathcal{B}}(X)\geq \left|\left\lbrace i<k : V_i\cap X=\varnothing\right\rbrace\right|).
\end{equation}    
\end{thm}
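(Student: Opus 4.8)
I would prove the two directions separately; the reverse one is the substance.

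\textbf{Necessity.} Suppose $\mathcal{C}_0,\dots,\mathcal{C}_{k-1}$ are edge-disjoint spanning branchings with $\mathcal{B}_i\subseteq\mathcal{C}_i$ and unchanged root sets. Fix $\varnothing\neq X\subseteq V$ and $i<k$ with $V_i\cap X=\varnothing$. Every edge of $\mathcal{C}_i$ entering $X$ has head in $X$, hence outside $V_i=V(\mathcal{B}_i)$, so it is not an edge of $\mathcal{B}_i$ and therefore lies in $D\bbslash\mathcal{B}$; and $\mathcal{C}_i$, being spanning with root set disjoint from the nonempty set $X$, has at least one such edge. Since the $\mathcal{C}_i$ are edge-disjoint, these edges are distinct over the relevant $i$, so $\varrho_{D\bbslash\mathcal{B}}(X)\geq|\{i<k:V_i\cap X=\varnothing\}|$.

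\textbf{Sufficiency.} Call a $k$-tuple $\mathcal{C}=(\mathcal{C}_i)_{i<k}$ \emph{feasible} if the $\mathcal{C}_i$ are edge-disjoint branchings, $\mathcal{B}_i\subseteq\mathcal{C}_i$ with the same root set, and \eqref{egy feltetel} holds with $\mathcal{C}$ in place of $\mathcal{B}$ (where $D\bbslash\mathcal{C}=(V,A\setminus\bigcup_{i<k}A(\mathcal{C}_i))$; this is a subgraph of $D\bbslash\mathcal{B}$, hence has no forward-infinite path). Thus $\mathcal{B}$ is feasible, and a feasible $\mathcal{C}$ all of whose members are spanning is exactly what we want. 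For feasible $\mathcal{C}$ put $f_{\mathcal{C}}(X)=\varrho_{D\bbslash\mathcal{C}}(X)-|\{i<k:V(\mathcal{C}_i)\cap X=\varnothing\}|$; since $\varrho_{D\bbslash\mathcal{C}}$ is submodular and the subtracted set-function is supermodular, $f_{\mathcal{C}}$ is submodular, so the \emph{tight} sets ($\varnothing\neq X$ with $f_{\mathcal{C}}(X)=0$) are closed under union and under nonempty intersection. My plan is a transfinite recursion over a fixed well-ordering of $\{(j,v):j<k,\ v\in V\}$, building a $\subseteq$-increasing chain $(\mathcal{C}^{\alpha})_{\alpha}$ of feasible configurations: at the step for $(j,v)$ do nothing if $v$ already lies in the $j$-th branching, and otherwise extend that branching along a single finite path of the current $D\bbslash\mathcal{C}$ so as to absorb $v$ while remaining feasible; take componentwise unions at limit stages; at the end all branchings are spanning.

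Two points need care. The first is the single step, which I isolate as the \emph{Main Lemma}: if $\mathcal{C}$ is feasible and $v\notin V(\mathcal{C}_j)$, then there is a finite path $P$ of $D\bbslash\mathcal{C}$ meeting $V(\mathcal{C}_j)$ only in $\mathsf{start}(P)$, with $v\in V(P)$, such that replacing $\mathcal{C}_j$ by $\mathcal{C}_j+P$ yields again a feasible tuple. The second is that feasibility must survive limit stages, and this is genuinely delicate, because for a cut $X$ with infinitely many ingoing edges in $D\bbslash\mathcal{B}$ the cardinals $\varrho_{D\bbslash\mathcal{C}^{\alpha}}(X)$ need not tend to $\varrho_{D\bbslash\mathcal{C}^{\lambda}}(X)$, so the recursion must be organized so that no cut is ever over-consumed (for cuts that are finite, or that become tight, this is the familiar reserved-edge bookkeeping: each branching obliged to enter the cut does so exactly once, using one of the finitely many ingoing edges earmarked for it).

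The heart of the matter — and where I expect the main obstacle, and where the forward-infinite-path hypothesis is essential — is the Main Lemma. Applying \eqref{egy feltetel} to subsets of $V\setminus V(\mathcal{C}_j)$ shows $v$ is reachable from $V(\mathcal{C}_j)$ in $D\bbslash\mathcal{C}$, so a path is available, and a short computation shows that attaching a path is safe provided it enters no tight set already met by $\mathcal{C}_j$ and over-crosses no other cut. Using the lattice of tight sets I would localize the search to a minimal tight set $X_0$ containing $v$ and meeting $V(\mathcal{C}_j)$ (or $X_0=V$ if there is none): tightness of $X_0$ forces $v$ to be reachable from $V(\mathcal{C}_j)\cap X_0$ inside $(D\bbslash\mathcal{C})[X_0]$, and a path kept inside $X_0$ cannot enter $X_0$ from outside, so the only obstructions left are tight sets lying strictly inside $X_0$ that $\mathcal{C}_j$ meets but does not contain and that avoid $v$. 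A path dodging all of these is produced greedily, always stepping forward in $D\bbslash\mathcal{C}$, and the absence of forward-infinite paths is exactly what makes this greedy procedure terminate with a legal $P$; the Aharoni--Thomassen examples \cite{aharoni1989infinite} show that some such hypothesis is unavoidable. Combining the Main Lemma with the transfinite recursion (and the limit-stage bookkeeping) then proves the theorem.
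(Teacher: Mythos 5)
Your overall architecture (a Main Lemma allowing one branching to absorb a prescribed vertex along a finite path while preserving \eqref{egy feltetel}, followed by a transfinite recursion with componentwise unions at limits) is the same as the paper's, and your necessity argument is fine. But both of the places you yourself flag as the hard points contain genuine gaps. For the Main Lemma, you localize to ``a minimal tight set $X_0$ containing $v$ and meeting $V(\mathcal{C}_j)$''; tight (and dangerous) sets are only closed under \emph{finite} intersections, so in an infinite digraph such a minimal set need not exist --- the possible failure of minimality is exactly the difficulty the forward-infinite-path hypothesis has to overcome, so invoking it begs the question. Moreover, your greedy ``dodge all obstructing tight sets, stepping forward; absence of forward-infinite paths makes this terminate'' does not prove anything: termination of a forward walk is automatic under the hypothesis, but the walk can terminate by getting stuck before reaching $v$ legally. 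The paper's actual argument is a proof by contradiction of a different shape: assuming no legal path to $v$ exists, one builds an infinite strictly decreasing chain of dangerous sets $B_0\supsetneq B_1\supsetneq\cdots$ together with pairwise distinct edges $e_{n+1}\in\mathsf{e}(B_n\setminus B_{n+1},B_{n+1})$, and then --- via a Menger-type proposition (with auxiliary source and infinitely many parallel edges) and a corollary on path systems between nested dangerous sets whose endpoint multisets match --- concatenates finitely many ($l$) edge-disjoint paths containing all the $e_n$, so one of them is forward-infinite, contradicting the hypothesis. Nothing in your sketch produces this mechanism, and it is the heart of the proof.

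The limit stages are also not actually handled. Your ``reserved-edge bookkeeping'' is a hope, not a construction, and your well-ordering of pairs $(j,v)$ does not suffice: when branching $j$ absorbs $v$ along a path $P$, all interior vertices of $P$ enter $V(\mathcal{C}_j)$ but not the other branchings, and the pairs $(i,w)$ for those $w$ may only be scheduled after a limit ordinal; at that limit a cut $X$ whose ingoing edges were partly consumed can violate \eqref{egy feltetel}. The paper's device is concrete and different: immediately after each application of the Lemma, \emph{every} vertex newly acquired by any branching (the whole finite set $N^{\beta}$, not just the target vertex) is added to all $k$ branchings before the next limit ordinal. Then at a limit $\beta$, for any $X$ either no ingoing edge of $X$ has been used, so $\varrho_{D\bbslash\mathcal{B}^{\beta}}(X)=\varrho_{D\bbslash\mathcal{B}}(X)\geq s_0(X)\geq s_{\beta}(X)$, or some used edge $e$ enters $X$, in which case $\mathsf{end}(e)\in X$ already lies in all $V_i^{\beta}$, so $s_{\beta}(X)=0$ and the condition holds trivially. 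You would need to replace your scheduling by this (or an equivalent) mechanism, and to supply the nested-dangerous-set contradiction for the Main Lemma, before the proof is complete.
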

If there is an $ r\in V $ such that $ V_i=\{ r \} $ for all $ i<k $, then we get the following special case.
\begin{cor}
Let the digraph $ D $ be  $ k $-edge-connected from the vertex $ r $ for some $ k\in \mathbb{N} $, and suppose that there are no 
forward-infinite paths in $ D 
$. Then there are $ k $ edge-disjoint spanning arborescences in $ D $ rooted at $ r $.
\end{cor}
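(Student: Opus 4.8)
The Corollary is the special case $V_i=\{r\}$, $A_i=\varnothing$ of Theorem~\ref{főtétel}: there $D\bbslash\mathcal{B}=D$, condition~\eqref{egy feltetel} says precisely that $\varrho_D(X)\ge k$ for every $\varnothing\ne X\subseteq V\setminus\{r\}$, and a spanning branching with root set $\{r\}$ is a spanning arborescence rooted at $r$; so the plan is to prove the theorem. \emph{Necessity} of \eqref{egy feltetel} is the easy direction: if the $\mathcal{B}_i$ extend to spanning branchings $\mathcal{B}_i'=(V,A_i\cup A_i')$ with the $A_i'$ pairwise disjoint and disjoint from $\bigcup_j A_j$, then for any $\varnothing\ne X\subseteq V$ and any $i$ with $V_i\cap X=\varnothing$ every vertex of $X$ is reached in $\mathcal{B}_i'$ from the root set of $\mathcal{B}_i$, which lies outside $X$, and the last edge of such a route before it first enters $X$ lies in $A_i'\subseteq A\setminus\bigcup_j A_j$ --- it cannot lie in $A_i$, since its head in $X$ would then be a vertex of $V_i$. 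Distinct indices $i$ yield distinct such ingoing edges of $X$ in $D\bbslash\mathcal{B}$, which is exactly \eqref{egy feltetel}.

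For \emph{sufficiency}, assuming \eqref{egy feltetel}, the plan is a transfinite recursion enlarging the $\mathcal{B}_i$ by one finite path at a time. Call a tuple $\mathcal{C}=(\mathcal{C}_0,\dots,\mathcal{C}_{k-1})$ of pairwise edge-disjoint branchings of $D$ \emph{feasible} if $\mathcal{B}_i\subseteq\mathcal{C}_i$ with unchanged root set for every $i$ and \eqref{egy feltetel} holds with $D\bbslash\mathcal{B}$ replaced by $D\bbslash\mathcal{C}$ (which again has no forward-infinite path, being a subgraph of $D\bbslash\mathcal{B}$). Fix a well-ordering of $V\times\{0,\dots,k-1\}$ and start from the feasible tuple $\mathcal{B}$; at a successor step let $(v,i)$ be the least pair with $v\notin V(\mathcal{C}_i)$ --- if there is none, all $\mathcal{C}_i$ are spanning branchings and we are done --- and replace $\mathcal{C}_i$ by $\mathcal{C}_i+P$ for a carefully chosen finite path $P$ of $D\bbslash\mathcal{C}$ from $V(\mathcal{C}_i)$ to $v$; at a limit step take the componentwise union. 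Since we only add edges and fully control which vertex is attached to which branching, this terminates at spanning branchings with the original root sets, provided that (a) at every successor step such a $P$ can be chosen preserving feasibility, and (b) feasibility is preserved at limit steps.

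For (a): \emph{some} finite such $P$ exists because the set of vertices not reachable from $V(\mathcal{C}_i)$ along edges of $D\bbslash\mathcal{C}$ is disjoint from $V(\mathcal{C}_i)$, hence by \eqref{egy feltetel} is empty or has a free ingoing edge --- the latter impossible, since that edge's head would be reachable --- so $v$ is reachable and a simple, hence finite, path of the desired kind exists. To choose $P$ preserving \eqref{egy feltetel} one watches the \emph{tight} sets, those $\varnothing\ne X\subseteq V$ with $\varrho_{D\bbslash\mathcal{C}}(X)=|\{j:V(\mathcal{C}_j)\cap X=\varnothing\}|$: adding an ingoing edge of such an $X$ to $\mathcal{C}_i$ drops $\varrho_{D\bbslash\mathcal{C}}(X)$ by one, and drops $|\{j:V(\mathcal{C}_j)\cap X=\varnothing\}|$ by one exactly when $V(\mathcal{C}_i)$ did not previously meet $X$, so the only threat to feasibility from $P$ is a tight $X$ that $P$ enters more than once, or enters while $V(\mathcal{C}_i)$ already met it. Submodularity of $\varrho_{D\bbslash\mathcal{C}}$ and supermodularity of $X\mapsto|\{j:V(\mathcal{C}_j)\cap X=\varnothing\}|$ make the tight sets through a fixed vertex closed under intersection and union, so --- after checking that the intersection of all of them is still tight --- there is a smallest tight set through any vertex that already meets $V(\mathcal{C}_i)$, and rerouting $P$ through these minimal tight sets (the finite augmenting-path surgery behind the classical proof of Edmonds' theorem, legitimate here because every tight set carries an inherited instance of \eqref{egy feltetel}) yields a $P$ entering each tight set at most once and only when a genuinely new vertex is gained, so $\mathcal{C}+P$ is again feasible.

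Point (b) is the heart of the matter, and the step I expect to be genuinely hard. Fix a limit stage, write the new tuple as the componentwise union of the earlier tuples $\mathcal{C}^\alpha$, and fix $\varnothing\ne X\subseteq V$ with $m:=|\{j:V(\mathcal{C}_j)\cap X=\varnothing\}|\ge 1$; we must show $\varrho_{D\bbslash\mathcal{C}}(X)\ge m$. Along the recursion $\varrho_{D\bbslash\mathcal{C}^\alpha}(X)$ is non-increasing, so if $\varrho_D(X)$ is finite it is eventually constant and the inequality $\varrho_{D\bbslash\mathcal{C}^\alpha}(X)\ge|\{j:V(\mathcal{C}_j^\alpha)\cap X=\varnothing\}|$ passes to the limit. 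The difficult case is $\varrho_D(X)=\infty$: then $\varrho_{D\bbslash\mathcal{C}^\alpha}(X)$ might be infinite at every stage $\alpha$ and yet drop below $m$ in the union, if the augmenting paths used all but finitely many ingoing edges of $X$. This is exactly the point at which I expect the absence of forward-infinite paths in $D\bbslash\mathcal{B}$ to be essential, combined with the controllability of the construction: I would arrange the well-ordering and the augmenting paths so that each $P$ moves ``forward'' in $D\bbslash\mathcal{C}$ --- never leaving a set and then re-entering it, and never entering from outside a set that its branching already meets --- so that each crossing of $X$ costs exactly one ingoing edge, and then deduce from the no-forward-infinite-path hypothesis that such crossings cannot recur without bound, so only finitely many ingoing edges of $X$ are ever used and $\varrho_{D\bbslash\mathcal{C}}(X)=\infty\ge m$. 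Making ``forward-moving'' augmenting paths coexist with the demand that every vertex eventually enter every branching, and extracting from the no-forward-infinite-path hypothesis the precise bound on the ingoing edges consumed at each needy set, is where the real content of the proof lies.
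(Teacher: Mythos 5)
Your reduction of the Corollary to Theorem \ref{főtétel} (take $V_i=\{r\}$, $A_i=\varnothing$) and your necessity argument are fine, and your overall plan --- transfinite recursion, one finite augmenting path per successor step, unions at limits --- is the same shape as the paper's. The genuine gap is in your step (a), the existence at each successor step of a path $P$ from $V(\mathcal{C}_i)$ to $v$ whose addition preserves condition (\ref{egy feltetel}); this is exactly the paper's Lemma \ref{seged}, and it is the one place where the absence of forward-infinite paths is indispensable. Your argument for (a) rests on ``there is a smallest tight set through any vertex,'' obtained by intersecting all tight sets through it; but tightness and dangerousness are only closed under \emph{pairwise} intersection (Proposition \ref{metszetveszélyes}), and in an infinite digraph there may be an infinite strictly decreasing chain of dangerous sets with no minimal element, so the finite augmenting-path surgery of Lovász does not transfer --- the paper says as much in its introduction. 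The paper's proof of Lemma \ref{seged} goes the other way: assuming no admissible path to $v$ exists, it recursively manufactures a strictly decreasing chain of dangerous sets $B_0\supsetneq B_1\supsetneq\cdots$ together with pairwise distinct edges $e_{n+1}$ entering $B_{n+1}$ from $B_n\setminus B_{n+1}$, then uses Corollary \ref{végtelenutak} (a Menger-type path-system statement inside nested dangerous sets) to concatenate path systems across the chain and extract a forward-infinite path in $D\bbslash\mathcal{B}$, a contradiction. Your sketch never invokes the no-forward-infinite-path hypothesis at this point, so (a) is unproven, and the minimal-tight-set route you propose actually fails.

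Conversely, the step you single out as ``the heart of the matter,'' the limit step (b), is where the paper needs no such hypothesis at all: it is settled by bookkeeping. Whenever a vertex is added to one branching, it is added to all $k$ branchings before the next limit stage; then for any $\varnothing\neq X\subseteq V$ at a limit, either no ingoing edge of $X$ has ever been consumed, in which case $\varrho_{D\bbslash\mathcal{B}^{\beta}}(X)=\varrho_{D\bbslash\mathcal{B}}(X)$ and the original inequality passes through, or some consumed edge has its head in $X$, and that head already lies in every $V_i^{\beta}$, so the requirement on $X$ is $0$ (Proposition \ref{limeszlépés}). So your ``forward-moving paths'' programme for (b) is unnecessary, while the real content --- proving Lemma \ref{seged} via the infinite chain of dangerous sets and the forward-infinite path it would create --- is missing from your proposal.
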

\begin{rem}
Our proof of  Theorem \ref{főtétel} also works in a more general case when there is no restriction on the quantity of the 
initial branchings, but all vertices belong to all but finitely many of these branchings.
\end{rem}

\begin{proof}[Proof of Theorem \ref{főtétel}]
 
The necessity of condition (\ref{egy feltetel}) is obvious, so we 
show only that it is sufficient. To do so, we need the following lemma.
\begin{lem}\label{seged}
For any $j<k $ and $ v\in V\setminus V_{j} $, there is a path $ P $ in $ D\bbslash \mathcal{B} $ from $ V_j $ to $ v $  such 
that 
condition 
(\ref{egy feltetel}) holds for $ D $ and  
 the $ k $-branching $ \mathcal{B}' $, where  $ \mathcal{B}_i'=
\begin{cases} \mathcal{B}_i+P &\mbox{if } i=j \\
\mathcal{B}_i & \mbox{otherwise }   
\end{cases} $.   

\end{lem}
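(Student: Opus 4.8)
The plan is to proceed by a "maximal counterexample / extremal path" argument combined with an analysis of the tight sets that obstruct condition (\ref{egy feltetel}). Fix $j<k$ and $v\in V\setminus V_j$. Call a set $\varnothing\neq X\subseteq V$ \emph{$i$-tight} if $\varrho_{D\bbslash\mathcal{B}}(X)=\left|\{i'<k: V_{i'}\cap X=\varnothing\}\right|$, i.e.\ equality holds in (\ref{egy feltetel}). The key point is that appending a path $P$ from $V_j$ to $v$ to $\mathcal{B}_j$ affects (\ref{egy feltetel}) only through sets $X$ that (a) are currently $i$-tight for some $i$, (b) are disjoint from $V_j$ (so they "count" $j$ among the disjoint root sets and thus have no slack to absorb the loss of an ingoing edge), and (c) are entered by $P$, i.e.\ $\mathsf{in}_{D\bbslash\mathcal{B}}(X)\cap A(P)\neq\varnothing$. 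Indeed, for such $X$, each edge of $P$ entering $X$ decreases $\varrho_{D\bbslash\mathcal{B}}(X)$ by one while the right-hand side drops by one only if $v\in X$ (since $P$ adds the single new vertex $v$ to $V_j$, merging it into the root component, so $V_j\cap X=\varnothing$ becomes nonempty only when $v\in X$). So the requirement on $P$ is: every set $X$ that is $i$-tight for some $i$, disjoint from $V_j$, and not containing $v$, is entered by $P$ \emph{at most} as many times as $v$'s membership allows — concretely, $P$ should enter such an $X$ \emph{exactly} via the same "budget", and in fact the cleanest formulation is that $P$ should not re-enter any such obstructing tight set.

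First I would show that the "bad" tight sets are closed under the relevant lattice operations: if $X$ and $Y$ are both $i$-tight (for possibly different indices) then submodularity of $\varrho_{D\bbslash\mathcal{B}}$ together with supermodularity of the count function $X\mapsto|\{i<k:V_i\cap X=\varnothing\}|$ forces $X\cap Y$ and $X\cup Y$ to be tight as well, \emph{provided} $X\cap Y\neq\varnothing$; moreover one gets $\mathsf{e}_{D\bbslash\mathcal{B}}(X\setminus Y, Y\setminus X)=\varnothing$. This is the standard uncrossing computation. Using this, among all sets that are $i$-tight for some $i$, disjoint from $V_j$, and not containing $v$, I would look at those that \emph{separate} $v$ from $V_j$ — but here is where the asymmetry matters. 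Because $D\bbslash\mathcal{B}$ has no forward-infinite path, I would pick $P$ greedily \emph{from $v$ backwards}: start at $v$ and repeatedly prepend an edge, at each step choosing the new tail vertex so that we stay out of every obstructing tight set we have already "used up". The no-forward-infinite-path hypothesis guarantees this backward process terminates (a forward-infinite path in $D\bbslash\mathcal{B}$ read in reverse is exactly a backward-infinite search, which cannot go on forever), and it must terminate in $V_j$: if it got stuck at some $w\notin V_j$ with every in-edge of $w$ blocked, the union of the blocking tight sets would (by the uncrossing closure above) itself be a tight set disjoint from $V_j$, not containing $v$, containing $w$, with no available ingoing edge — but then that union would have $\varrho_{D\bbslash\mathcal{B}}$ strictly too small once we account for all the already-used in-edges, contradicting (\ref{egy feltetel}) applied to it.

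The main obstacle I anticipate is exactly the termination-and-correctness of this backward search: one must carefully maintain an invariant along the partially built path — something like "for every obstructing tight set $X$, the number of edges of the current path entering $X$ is at most the number of vertices of the current path lying in $X$" (equivalently, the path restricted to $X$ is itself a union of sub-paths each starting inside $X$) — and show both that this invariant can always be extended by one more backward edge (using (\ref{egy feltetel}) and uncrossing to produce an unblocked in-edge) and that it is preserved under the limit when the search is transfinite. The transfinite aspect is delicate: even without forward-infinite paths, the backward search could a priori be long, so I would either argue it is actually finite on each vertex's "reach" or phrase it as: take $P$ to be a maximal path from $V_j$ toward $v$ satisfying the invariant, and show maximality plus (\ref{egy feltetel}) forces $\mathsf{end}(P)=v$. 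Verifying that the invariant indeed implies (\ref{egy feltetel}) holds for $\mathcal{B}'$, i.e.\ that no *other* tight set (one meeting $V_j$, or one containing $v$) can be violated, is a short check: adding $v$ to $V_j$'s component only helps sets containing $v$, and sets meeting $V_j$ never counted $j$ to begin with, so their right-hand side is unchanged while their in-degree can only... — wait, their in-degree in $D\bbslash\mathcal{B}$ drops by the number of $P$-edges entering them, so one still needs the invariant there too; this is why the invariant must be stated for \emph{all} tight sets disjoint from the relevant root set, and the containment-of-$v$ bookkeeping is what makes it close up. That bookkeeping, rather than any deep idea, is the technical heart.
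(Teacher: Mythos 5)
Your plan has a genuine gap at its load-bearing step: the termination of the backward greedy search. The hypothesis of Theorem \ref{főtétel} forbids \emph{forward}-infinite paths in $D\bbslash\mathcal{B}$; it says nothing about backward-infinite ones (that is Thomassen's case, which the paper explicitly contrasts with). If your search from $v$ keeps prepending in-edges forever, the object you produce is a path $\cdots\to x_2\to x_1\to v$, i.e.\ a \emph{backward}-infinite path, which the hypothesis permits — your parenthetical ``a forward-infinite path read in reverse is exactly a backward-infinite search'' inverts the two notions. So nothing forces your search to reach $V_j$, and the approach as described does not use the actual hypothesis in a way that can close the argument. Two further, smaller problems: your bookkeeping of which sets obstruct the extension is off — adding $P$ puts \emph{all} of $V(P)$ into $V_j$, so for a tight $X$ disjoint from $V_j$ the right-hand side of (\ref{egy feltetel}) drops as soon as any vertex of $P$ lies in $X$, not only when $v\in X$ (the single-edge obstructions are precisely the tight sets \emph{meeting} $V_j$, the paper's ``dangerous'' sets); and when your search gets stuck you take the union of the blocking tight sets, but uncrossing only gives closure under finite unions/intersections with nonempty intersection, so an infinite union of tight sets need not be tight.

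For comparison, the paper argues in the opposite direction: it assumes the lemma fails for $v$, and then greedily pushes a path \emph{forward} from $V_0$ as far as condition (\ref{egy feltetel}) allows; the first forbidden edge $e_1$ must enter a dangerous set $B_1$, and repeating the argument inside $B_1$ (using Propositions \ref{metszetveszélyes} and \ref{elérhető}) yields a strictly nested sequence of dangerous sets $B_0\supsetneq B_1\supsetneq\cdots$ with edges $e_n$ entering $B_n$. Corollary \ref{végtelenutak} (built on the Menger-type Proposition \ref{path condition}) then supplies, for each $n$, an edge-disjoint path system inside $B_n$ whose endpoints match the start points of the next system, and concatenating these systems produces a forward-infinite path through the $e_n$ — contradicting the hypothesis. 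The forward orientation of that construction is exactly why the no-forward-infinite-path assumption does the work; your backward construction would need the opposite assumption.
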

Without loss of generality, it is enough to prove Lemma \ref{seged} for $ j=0 $, because the role of the initial branchings are 
symmetric. Before the 
proof, we need to devolp some basic tools 
in the spirit of Lovász's proof for the finite 
version of the theorem in \cite{lovasz1976two}.

\subsection{Basic tools}

We will prove here some facts which are known from finite branching-packing techniques and remain true with the same proof 
in the infinite case. In this subsection, we fix a digraph $ D=(V,A) $ and a $ k $-branching $ \mathcal{B}\   
(\mathcal{B}_i=(V_i,A_i),\ 
i=0,\dots,k-1)$ of $ D $, that satisfy condition (\ref{egy feltetel}). 

Call a set $ \varnothing \neq X \subseteq V 
$ \textbf{tight} (with respect to $ 
\mathcal{B} $), if $ 
\varrho_{D\bbslash \mathcal{B}}(X)=\left|\left\lbrace i<k: V_i\cap X=\varnothing \right\rbrace\right| $; and 
\textbf{dangerous},  
if it 
is tight 
and $ 
X\cap 
V_0\neq \varnothing $. For example, $ V $ itself is dangerous. It is easy to see that if $ e\in 
\mathsf{out}_{D\bbslash \mathcal{B}}(V_0) $,  then the extension $ \mathcal{B}_0'=\mathcal{B}_0+e $  violates condition 
(\ref{egy 
feltetel}) if and only if $ e $ is an ingoing edge of some dangerous set. 

\begin{prop}\label{metszetveszélyes}
If $ X, Y $ are dangerous and $ X\cap Y\neq \varnothing $, then $ X\cap Y $ is also dangerous.
\end{prop}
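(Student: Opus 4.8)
The plan is to use the standard submodularity-type inequality for $\varrho$ together with the hypothesis that both $X$ and $Y$ are tight. First I would recall the uncrossing inequality: for any two sets $X,Y\subseteq V$ in any digraph $H$,
\[
\varrho_H(X)+\varrho_H(Y)\geq \varrho_H(X\cap Y)+\varrho_H(X\cup Y),
\]
which holds because every edge contributes at least as much to the right side as to the left (a routine case check on the endpoints of an edge). Apply this with $H=D\bbslash\mathcal{B}$.

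Next I would control the right-hand counting term. Writing $f(Z)=\bigl|\{i<k: V_i\cap Z=\varnothing\}\bigr|$, I claim $f$ is supermodular in the relevant direction: $f(X)+f(Y)\leq f(X\cap Y)+f(X\cup Y)$. Indeed, for a fixed $i$, if $V_i\cap X=\varnothing$ and $V_i\cap Y=\varnothing$ then $V_i\cap(X\cup Y)=\varnothing$ and $V_i\cap(X\cap Y)=\varnothing$, so index $i$ contributes $2$ to both sides; if $V_i$ meets at most one of $X,Y$ it contributes at most $1$ to the left and (since $V_i\cap(X\cap Y)=\varnothing$ whenever $V_i$ misses one of them) at least that much to the right. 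So the inequality holds term by term.

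Now combine. Since $X$ and $Y$ are tight, $\varrho_{D\bbslash\mathcal{B}}(X)=f(X)$ and $\varrho_{D\bbslash\mathcal{B}}(Y)=f(Y)$. Using the two displayed inequalities and condition (\ref{egy feltetel}) applied to $X\cap Y$ and $X\cup Y$ (note $X\cap Y\neq\varnothing$ by assumption, and $X\cup Y\neq\varnothing$), we get
\[
f(X)+f(Y)=\varrho_{D\bbslash\mathcal{B}}(X)+\varrho_{D\bbslash\mathcal{B}}(Y)\geq \varrho_{D\bbslash\mathcal{B}}(X\cap Y)+\varrho_{D\bbslash\mathcal{B}}(X\cup Y)\geq f(X\cap Y)+f(X\cup Y)\geq f(X)+f(Y),
\]
so equality holds throughout; in particular $\varrho_{D\bbslash\mathcal{B}}(X\cap Y)=f(X\cap Y)$, i.e. $X\cap Y$ is tight. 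Finally, since $X$ is dangerous, $V_0\cap X\neq\varnothing$; I would still need $V_0\cap(X\cap Y)\neq\varnothing$. This does not follow from the $f$-equality alone, so here is the subtle point: from the equality chain we also get that for every $i$ the term-by-term inequality for $f$ is an equality — in particular for $i=0$. Because $Y$ is dangerous, $V_0\cap Y\neq\varnothing$, so $0$ contributes $0$ to $f(Y)$; for the $i=0$ term to balance, and using $V_0\cap X\neq\varnothing$ as well, one checks the only consistent possibility is $V_0\cap(X\cap Y)\neq\varnothing$ (otherwise $V_0$ would contribute to $f(X\cap Y)$ but to neither $f(X)$ nor $f(Y)$ nor $f(X\cup Y)$, breaking equality). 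Hence $X\cap Y$ is dangerous.

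The main obstacle I anticipate is exactly this last step: tightness of $X\cap Y$ follows cleanly from uncrossing, but the condition $V_0\cap(X\cap Y)\neq\varnothing$ needs the full strength of the equality (not just tightness of $X\cap Y$), so I would want to make the term-by-term argument for the $i=0$ summand completely explicit rather than waving at "supermodularity." Everything else is the routine finite uncrossing argument, unaffected by infiniteness since all the quantities compared are either equal by tightness or bounded by (\ref{egy feltetel}), and no cardinal arithmetic beyond "$a+b\geq c+d$ and $a+b\leq c+d$ forces equality" is needed — which is fine even for infinite cardinals here because tightness pins $\varrho(X),\varrho(Y)$ to the finite-or-infinite values $f(X),f(Y)$ and the inequalities are genuine inequalities of cardinals.
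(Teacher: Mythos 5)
Your proposal is correct and follows essentially the same route as the paper: submodularity of $\varrho_{D\bbslash\mathcal{B}}$ plus supermodularity of the counting function, a squeeze using condition~(\ref{egy feltetel}) to get tightness of $X\cap Y$ (and $X\cup Y$), and then the term-by-term equality for the index $i=0$ to conclude $V_0\cap X\cap Y\neq\varnothing$, exactly as in the paper's observation about when the supermodular inequality is an equality. The only cosmetic remark is that your case analysis for the supermodularity should be phrased as ``$V_i$ misses both'', ``$V_i$ misses exactly one'', ``$V_i$ meets both'' (the last case being the one that can be strict), but the substance is identical.
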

\begin{sbiz}
Let $ s: \mathcal{P}(V)\rightarrow \mathbb{N},\ s(X)=\left|\left\lbrace i<k: V_i\cap X=\varnothing \right\rbrace\right| $. 
Then $ s $ is supermodular i.e., for  $ X,Y\subseteq V $, we have  $ s(X)+s(Y)\leq s(X\cup Y)+s(X\cap Y) $. Indeed, let $ i<k 
$ be arbitrary. If $ V_i\cap X=\varnothing $ and $ V_i\cap Y=\varnothing $, then $ V_i\cap(X\cup Y)=\varnothing $ and $ V_i\cap 
X\cap Y=\varnothing $, so  $ V_i $'s contribution to both sides of the inequality is $ 2 $. If $ V_i\cap X=\varnothing $ and 
$ 
V_i\cap Y\neq \varnothing $, then  $ V_i $'s contribution to both sides is $ 1 $.
 Observe that equality holds if and only 
if there is no $ V_i $ such that 
 $ V_i\cap X\neq \varnothing,\ V_i\cap Y\neq\varnothing  $ but $ V_i\cap X\cap Y= \varnothing $. Let $ 
p(X)=\varrho_{D\bbslash \mathcal{B}}(X)-s(X) $ (for an infinite cardinal $ \kappa $ and $ n\in \mathbb{N} $ let $ 
\kappa-n=\kappa 
$). Then  condition 
(\ref{egy 
feltetel}) is equivalent with the requirement $  p(X)\geq 0 $ for all $ X\neq \varnothing $, and 
the tightness of $ X $ means $ p(X)=0 $. The function  $ \varrho_{D\bbslash \mathcal{B}} $ is submodular, therefore so is $ p $ i.e. $ 
p(X)+p(Y)\geq p(X\cup Y)+p(X\cap Y) $ holds for all $ X,Y\subseteq V $. Let $ X, Y $ be dangerous, 
and $ 
 X\cap 
 Y\neq \varnothing $. Then by submodularity and by condition (\ref{egy 
feltetel}), we get

\[ 0+0=p(X)+p(Y)\geq p(X\cup Y)+p(X\cap Y)\geq 0+0, \]

\noindent so $ X\cup Y $ and $ X\cap Y $ are tight. Therefore  $s(X)+s(Y)=s(X\cup Y)+s(X\cap Y) $. By the observation 
about the 
function $ s $, we may conclude from $ X\cap V_0\neq 
\varnothing,\ Y\cap V_0\neq \varnothing  $ that  $ X\cap Y\cap V_0 \neq 
\varnothing 
$, so $ X\cap Y $ is dangerous. 
\end{sbiz}

\begin{prop}\label{elérhető}
Let $ B $ be a dangerous set. Then for any $ w\in B $, there is a path $ R $ from $ V_0\cap B $ to $ w $ in $ 
(D\bbslash \mathcal{B})[B] 
$.
\end{prop}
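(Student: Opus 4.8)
The natural candidate is to let $S$ be the set of all vertices of $B$ that are reachable from $V_0\cap B$ by a directed path inside $(D\bbslash\mathcal{B})[B]$. Since $V_0\cap B\neq\varnothing$ (because $B$ is dangerous), we have $\varnothing\neq V_0\cap B\subseteq S$, and it suffices to prove $S=B$. Assume for contradiction that some $w\in B\setminus S$, so $\varnothing\neq B\setminus S\subsetneq B$.

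The first key step is that no edge of $D\bbslash\mathcal{B}$ goes from $S$ to $B\setminus S$: if $e$ were such an edge, then both its endpoints lie in $B$, so $e$ is an edge of $(D\bbslash\mathcal{B})[B]$, and appending $e$ to a path from $V_0\cap B$ to $\mathsf{start}(e)$ (which stays inside $S$, hence avoids $\mathsf{end}(e)\in B\setminus S$) would witness $\mathsf{end}(e)\in S$, a contradiction. Consequently every ingoing edge of $B\setminus S$ in $D\bbslash\mathcal{B}$ starts outside $B$, hence is also an ingoing edge of $B$; therefore $\mathsf{in}_{D\bbslash\mathcal{B}}(B\setminus S)\subseteq\mathsf{in}_{D\bbslash\mathcal{B}}(B)$ and so $\varrho_{D\bbslash\mathcal{B}}(B\setminus S)\leq\varrho_{D\bbslash\mathcal{B}}(B)$.

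The second key step compares this against the counting function $s$ from the proof of Proposition \ref{metszetveszélyes}. Tightness of $B$ gives $\varrho_{D\bbslash\mathcal{B}}(B)=s(B)$, a natural number, so $\varrho_{D\bbslash\mathcal{B}}(B\setminus S)$ is finite. Since $B\setminus S\subseteq B$, any $V_i$ disjoint from $B$ is disjoint from $B\setminus S$, so $s(B)\leq s(B\setminus S)$; moreover $V_0\cap B\subseteq S$ forces $V_0\cap(B\setminus S)=\varnothing$ while $V_0\cap B\neq\varnothing$, so the index $0$ is counted in $s(B\setminus S)$ but not in $s(B)$, giving the strict gain $s(B\setminus S)\geq s(B)+1$. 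Combining with condition (\ref{egy feltetel}) applied to the nonempty set $B\setminus S$,
\[
s(B\setminus S)\leq\varrho_{D\bbslash\mathcal{B}}(B\setminus S)\leq\varrho_{D\bbslash\mathcal{B}}(B)=s(B)\leq s(B\setminus S)-1,
\]
a contradiction. Hence $S=B$ and in particular $w$ is reachable from $V_0\cap B$ inside $(D\bbslash\mathcal{B})[B]$.

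I expect no serious obstacle here; the argument is a standard uncrossing-free reachability argument. The one point that must be handled with care is the use of \emph{dangerous} rather than merely \emph{tight}: the nonemptiness of $V_0\cap B$ is exactly what produces the extra $+1$ in $s(B\setminus S)$ that breaks the chain of inequalities, and one should also note that tightness of $B$ makes all the cardinals involved finite so that ``$+1$'' is meaningful.
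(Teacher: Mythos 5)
Your argument is correct and is exactly the paper's proof: the paper also takes the set of vertices reachable from $V_0\cap B$ inside $(D\bbslash\mathcal{B})[B]$ and observes that its complement in $B$ would violate condition (\ref{egy feltetel}); you have merely spelled out the counting (via $\mathsf{in}_{D\bbslash\mathcal{B}}(B\setminus S)\subseteq\mathsf{in}_{D\bbslash\mathcal{B}}(B)$, tightness of $B$, and the extra index $0$) that the paper leaves implicit. No gaps.
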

\begin{sbiz}
Let $ B' $ be the set of vertices which are reachable from $ V_0\cap B $ in  $ (D\bbslash \mathcal{B})[B] $. Suppose, that $ 
B'\neq B $. Then $ B\setminus B' $ violates condition (\ref{egy 
feltetel}), which is a contradiction.
\end{sbiz}

\begin{prop}\label{path condition}
For all $ w\in V $, there is a system of edge-disjoint paths $ 
\left\lbrace P_{i} \right\rbrace_{i<k} $ in $ D\bbslash \mathcal{B} $ such that $P_i $ goes from $ V_i $ to $ w $. 
\end{prop}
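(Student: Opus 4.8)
The plan is to reduce the claim to Menger's theorem in an auxiliary digraph. I would build $ D^{+} $ from $ D\bbslash\mathcal{B} $ by adding a new vertex $ s $ and a new vertex $ t_i $ for each $ i<k $, together with the edges $ s\to t_i\ (i<k) $ and one edge $ t_i\to v $ for every $ i<k $ and every $ v\in V_i $. The only edges of $ D^{+} $ entering some $ t_i $ come from $ s $, and $ s $ has no ingoing edges, so every $ s $-$ w $ path of $ D^{+} $ has the form $ s,t_i,v,\dots,w $ with $ v\in V_i $ and $ v,\dots,w $ a path of $ D\bbslash\mathcal{B} $. Hence $ k $ pairwise edge-disjoint $ s $-$ w $ paths of $ D^{+} $ use the $ k $ distinct edges $ s\to t_i $, so each $ t_i $ lies on exactly one of them; deleting the first two vertices $ s,t_i $ from the path through $ t_i $ leaves a path of $ D\bbslash\mathcal{B} $ that starts in $ V_i $ and ends at $ w $, and truncating this path at the last vertex it meets in $ V_i $ gives a path $ P_i $ that goes from $ V_i $ to $ w $ in the sense defined above. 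Truncation only discards edges, so the $ P_i $ remain pairwise edge-disjoint. (If $ w\in V_i $, the path through $ t_i $ is just $ s,t_i,w $ and $ P_i $ is the one-vertex path $ w $.)

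It then remains to find $ k $ pairwise edge-disjoint $ s $-$ w $ paths in $ D^{+} $; by the edge version of Menger's theorem for a finite number of paths (which holds in arbitrary digraphs) it is enough to show that every $ s $-$ w $ edge-cut of $ D^{+} $ has at least $ k $ edges. So let $ s\in S\subseteq V(D^{+}) $ with $ w\notin S $, and put $ X:=V\setminus(S\cap V) $; then $ \varnothing\neq X $ because $ w\in X $. I would group the edges leaving $ S $ into: the edges $ s\to t_i $ with $ t_i\notin S $; for each $ i $ with $ t_i\in S $ and $ V_i\cap X\neq\varnothing $, at least one edge $ t_i\to v $ with $ v\in V_i\cap X $; and the edges of $ D\bbslash\mathcal{B} $ from $ S\cap V $ into $ X $, of which there are exactly $ \varrho_{D\bbslash\mathcal{B}}(X) $. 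An index $ i<k $ contributes to neither of the first two groups only when $ V_i\cap X=\varnothing $, so the cut has at least $ \varrho_{D\bbslash\mathcal{B}}(X)+k-|\{i<k:V_i\cap X=\varnothing\}| $ edges, and this is $ \geq k $ by condition (\ref{egy feltetel}) applied to the nonempty set $ X $.

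The only point that needs care is Menger's theorem in a possibly infinite $ D^{+} $, but for a fixed finite $ k $ it causes no trouble: starting from the zero flow one performs $ k $ rounds of augmentation, each along a finite $ s $-$ w $ path in the current residual digraph — one exists because the set of vertices reachable from $ s $ has no outgoing edge, hence would be an $ s $-$ w $ cut of size $ 0 $ if it did not contain $ w $ — and a routine cut count shows each round lowers the minimum $ s $-$ w $ cut by at most $ 1 $, so after $ k $ rounds the flow has value $ k $; being supported on finitely many edges, it decomposes into $ k $ pairwise edge-disjoint $ s $-$ w $ paths. Everything else — the normal form of $ s $-$ w $ paths in $ D^{+} $ and the truncation producing the $ P_i $ — is routine, so I do not expect any real obstacle beyond setting up the auxiliary digraph and the cut bookkeeping correctly.
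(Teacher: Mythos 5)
Your proposal is correct and follows essentially the same route as the paper: both reduce the statement to the finite-$k$ edge version of Menger's theorem in an auxiliary digraph obtained by adding a source $s$ and intermediate vertices feeding into the sets $V_i$, and both verify the cut condition using (\ref{egy feltetel}). The only cosmetic difference is that the paper puts $\aleph_0$ parallel edges from its vertex $v_i$ to each point of $V_i$ (so a cut of size $<k$ immediately forces $V_i\cap X=\varnothing$ for the $v_i$ outside the cut), whereas you use single edges $t_i\to v$ and compensate with a slightly finer cut count; both bookkeepings yield the same contradiction with condition (\ref{egy feltetel}).
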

\begin{sbiz}
 We extend $ D\bbslash \mathcal{B} $ to $ H $ by adding new vertices and edges (see figure \ref{ellenpeldakep}). Let $ 
 V(H)=V\cup 
 \left\lbrace 
 s 
\right\rbrace\cup \left\lbrace v_i 
\right\rbrace_{i<k},\  |\mathsf{e}_{H}(s,v_i)|=1\ (i<k) $ and  $  
|\mathsf{e}_{H}(v_i,u)|=\aleph_0\ (i<k, u\in V_i) $. If there are $ k $ 
edge-disjoint paths from $ s $ to $ w $ in $ H $, then we are done. Suppose, seeking a contradiction, that there are not. By 
Menger's theorem, there 
is a $w\in X\subseteq V(H)\setminus\left\lbrace s \right\rbrace $ with $ \varrho_{H}(X)<k $. Let $ 
l=\left|\left\lbrace v_i \right\rbrace_{i<k}\setminus X\right| $. Note that $ 0<l $, otherwise $ sv_i\in \mathsf{in}_{H}(X)\ 
(i<k) $ and hence $ 
k\leq\varrho_{H}(X) $ would follow. Since there are infinitely many parallel 
edges, $ 
X\cap V $ is disjoint from at least 
 $ l $ branchings. Otherwise $ \varrho_{H}(X)=\varrho_{D\bbslash\mathcal{B}}(X\cap V)+(k-l) $, so $ 
 \varrho_{D\bbslash\mathcal{B}}(X\cap 
V)=l+(\varrho_{H}(X)-k)<l  $, but then $ X\cap V $ violates condition (\ref{egy 
feltetel}) in $ D $ giving us a contradiction.
\end{sbiz}

\begin{figure}[h!]
\centering

\begin{tikzpicture}

\draw  (-5,3) node (v11) {} ellipse (2 and 1.3);
\draw  (-3,1) node (v17) {} ellipse (2 and 1.5);
\draw  (-11,0) node (v5) {} ellipse (1 and 1);
\node [outer sep=0,inner sep=0,minimum size=0] (v1) at (-10,2) {$\Huge{s}$};
\node [outer sep=0,inner sep=0,minimum size=0] (v2) at (-10,1) {$v_0$};
\node [outer sep=0,inner sep=0,minimum size=0] (v3) at (-8,2) {$v_1$};
\node [outer sep=0,inner sep=0,minimum size=0] (v4) at (-8,1) {$v_2$};

\draw  (v1) edge[->] (v2);
\draw  (v1) edge[->] (v3);
\draw  (v1) edge[->] (v4);

\node [outer sep=0,inner sep=0,minimum size=0] (v6) at (-10.5,-0.5) {};
\node [outer sep=0,inner sep=0,minimum size=0] (v7) at (-11.5,0) {};
\node [outer sep=0,inner sep=0,minimum size=0] (v8) at (-11,-0.5) {};
\node [outer sep=0,inner sep=0,minimum size=0] (v10) at (-5,4) {};
\node [outer sep=0,inner sep=0,minimum size=0] (v13) at (-4,2) {};
\node [outer sep=0,inner sep=0,minimum size=0] (v12) at (-2,1) {};
\node [outer sep=0,inner sep=0,minimum size=0] (v14) at (-3,2) {};
\node [outer sep=0,inner sep=0,minimum size=0] (v16) at (-3,0) {};
\node [outer sep=0,inner sep=0,minimum size=0] (v15) at (-2,0) {};
\node [outer sep=0,inner sep=0,minimum size=0] (v18) at (-4,1.5) {};
\node (v9) at (-4,3.5) {};
\node (v19) at (-4.5,2.5) {};

\draw  (v2) edge[->, line width=2] (v5);
\draw  (v2) edge[->, line width=2] (v6);
\draw  (v2) edge[->, line width=2] (v7);
\draw  (v2) edge[->, line width=2] (v8);

\draw  (v3) edge[->, line width=2] (v10);
\draw  (v3) edge[->, line width=2] (v11);

\draw  (v3) edge[->, line width=2] (v13);
\draw  (v4) edge[->, line width=2] (v12);
\draw  (v4) edge[->, line width=2] (v13);
\draw  (v4) edge[->, line width=2] (v14);
\draw  (v4) edge[->, line width=2] (v15);
\draw  (v4) edge[->, line width=2] (v16);
\draw  (v4) edge[->, line width=2] (v17);
\draw  (v4) edge[->, line width=2] (v18);
\draw  (v3) edge[->, line width=2] (v9);
\draw  (v3) edge[->, line width=2] (v19);

\draw  (-3.5,-1) node (v20) {} rectangle (-8.5,3.2);

\node at (-11,1.2) {$V_0$};
\node at (-2,2.5) {$V_2$};
\node at (-5,4.5) {$V_1$};
\node at (-8,3.4) {$X$};

\end{tikzpicture}
\caption{The construction of $ H $ and the cut $ X $ from the proof above in the case $ k=3,\ l=1 $.  (Thick 
arrows stand for countably infinite parallel edges).} 
\label{ellenpeldakep}
\end{figure}
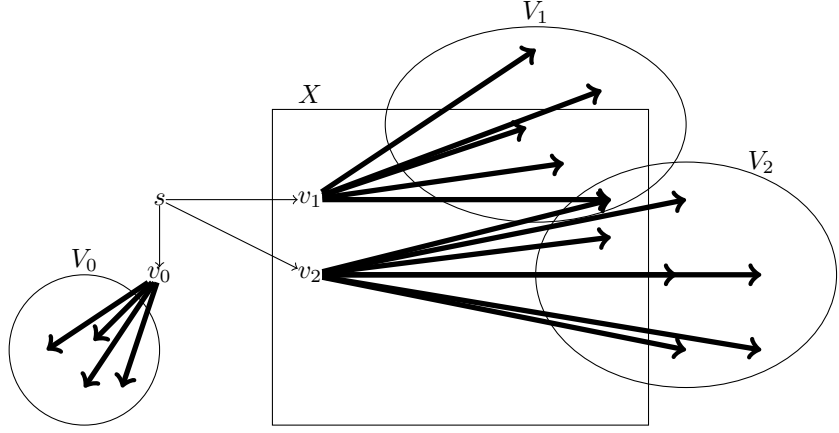

\begin{cor}\label{végtelenutak}
Let $ B_1 \subseteq B_0 $ be dangerous sets and let $\varrho_{D\bbslash \mathcal{B}}(B_0)=\varrho_{D\bbslash \mathcal{B}}(B_0)=l\geq 
1 $. Let $ s_j=\mathsf{end}(e_j) $, 
where $ \{ e_1,\dots,e_l \}=\mathsf{in}_{D\bbslash \mathcal{B}}(B_0) $. 
Then there is a system of edge-disjoint paths $ \left\lbrace P_{j} \right\rbrace_{j=1}^{l} $  in $ 
(D\bbslash \mathcal{B})[B_0] $ such that $ 
P_j$ goes from $ s_j $ to $ B_1 $. Such a path system necessarily contains all of the elements of $ 
\mathsf{e}_{D\bbslash \mathcal{B}}(B_0,B_1\setminus B_0) $, and the multiset of the endpoints of their elements is  $ \{\mathsf{end}(e): 
e \in 
\mathsf{in}_{D\bbslash \mathcal{B}}(B_1) 
\} $.
\end{cor}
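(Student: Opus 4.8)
The plan is to reduce the existence of the paths $\left\lbrace P_j\right\rbrace_{j=1}^{l}$ to a one-source, one-sink edge-disjoint paths problem inside $(D\bbslash\mathcal B)[B_0]$ and then invoke Menger's theorem. The fact that makes this go through is: every $Y$ with $B_1\subseteq Y\subseteq B_0$ satisfies $\varrho_{D\bbslash\mathcal B}(Y)\ge l$. Indeed, with $s(X)=\left|\left\lbrace i<k:V_i\cap X=\varnothing\right\rbrace\right|$ (as in the proof of Proposition~\ref{metszetveszélyes}), clearly $X\subseteq X'$ implies $s(X)\ge s(X')$; since $B_0$ and $B_1$ are tight with $\varrho_{D\bbslash\mathcal B}(B_0)=\varrho_{D\bbslash\mathcal B}(B_1)=l$ we have $s(B_0)=s(B_1)=l$, whence $l=s(B_0)\le s(Y)\le s(B_1)=l$, so $s(Y)=l$, and then condition~(\ref{egy feltetel}) gives $\varrho_{D\bbslash\mathcal B}(Y)\ge s(Y)=l$.

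Next I would form the digraph $H$ from $(D\bbslash\mathcal B)[B_0]$ by contracting $B_1$ to a single new vertex $b$ (discarding the loops thus created and every edge leaving $b$) and adding a new vertex $a$ together with $l$ new edges $a\to s_1,\dots,a\to s_l$, where $s_j$ is to be read as $b$ whenever $s_j\in B_1$. For any $Z$ with $b\in Z$, $a\notin Z$, put $Y:=(Z\setminus\{b\})\cup B_1$, so $B_1\subseteq Y\subseteq B_0$; a direct count shows $\varrho_H(Z)=\varrho_{D\bbslash\mathcal B}(Y)$, because each of these cardinals equals the number of edges $e_j$ with $\mathsf{end}(e_j)\in Y$ plus the number of edges of $D\bbslash\mathcal B$ running from $B_0\setminus Y$ into $Y$. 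By the displayed fact $\varrho_H(Z)\ge l$, so by Menger's theorem (the edge version for finitely many paths holds in digraphs of arbitrary size) there are $l$ edge-disjoint $a$--$b$ paths $Q_1,\dots,Q_l$ in $H$. As $a$ has exactly $l$ out-edges, after relabelling $Q_j$ begins with $a\to s_j$; deleting this first edge and un-contracting $b$ turns $Q_j$ into a path $P_j$ of $(D\bbslash\mathcal B)[B_0]$ that starts at $s_j$ and meets $B_1$ only in its last vertex, and the $P_j$ remain pairwise edge-disjoint; this is the required system.

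For the remaining assertion of the corollary I would argue directly. Given any such system, set $m=\left|\left\lbrace j:s_j\in B_1\right\rbrace\right|$. For those indices $P_j$ is the one-vertex path at $s_j$, while for each $j$ with $s_j\notin B_1$ the path $P_j$ uses exactly one edge of $\mathsf{e}_{D\bbslash\mathcal B}(B_0\setminus B_1,B_1)$, namely its last edge, since $V(P_j)\cap B_1=\{\mathsf{end}(P_j)\}$. Edge-disjointness makes these $l-m$ edges distinct. On the other hand, $\mathsf{in}_{D\bbslash\mathcal B}(B_1)$ is the disjoint union of the $m$ edges among $e_1,\dots,e_l$ that end in $B_1$ and of $\mathsf{e}_{D\bbslash\mathcal B}(B_0\setminus B_1,B_1)$, so $\left|\mathsf{e}_{D\bbslash\mathcal B}(B_0\setminus B_1,B_1)\right|=l-m$; hence the last edges of the nontrivial $P_j$ exhaust $\mathsf{e}_{D\bbslash\mathcal B}(B_0\setminus B_1,B_1)$. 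Finally, reading off the $\mathsf{end}(P_j)$: the $m$ trivial paths contribute $\left\lbrace \mathsf{end}(e):e\in\mathsf{in}_{D\bbslash\mathcal B}(B_1)\cap\mathsf{in}_{D\bbslash\mathcal B}(B_0)\right\rbrace$ and the others contribute $\left\lbrace \mathsf{end}(e):e\in\mathsf{e}_{D\bbslash\mathcal B}(B_0\setminus B_1,B_1)\right\rbrace$, which together are precisely $\left\lbrace \mathsf{end}(e):e\in\mathsf{in}_{D\bbslash\mathcal B}(B_1)\right\rbrace$ as multisets. (I read the set written $\mathsf{e}_{D\bbslash\mathcal B}(B_0,B_1\setminus B_0)$ in the statement as $\mathsf{e}_{D\bbslash\mathcal B}(B_0\setminus B_1,B_1)$.)

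I expect the only delicate points to be the cut identity $\varrho_H(Z)=\varrho_{D\bbslash\mathcal B}(Y)$ and the bookkeeping needed to transfer edge-disjoint paths back and forth across the contraction of $B_1$ (together with the reminder that the finite-path form of Menger's theorem needs no finiteness assumption on the digraph). Once the observation $s(Y)=l$ for all $Y$ with $B_1\subseteq Y\subseteq B_0$ is in hand, the rest is routine, so I would isolate and prove it first.
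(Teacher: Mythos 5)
Your proposal is correct (including your readings of the two typos in the statement: that $\varrho_{D\bbslash \mathcal{B}}(B_1)=l$ is also part of the hypothesis, and that $\mathsf{e}_{D\bbslash \mathcal{B}}(B_0,B_1\setminus B_0)$ is meant to be $\mathsf{e}_{D\bbslash \mathcal{B}}(B_0\setminus B_1,B_1)$), but it takes a genuinely different route from the paper. The paper deduces the corollary from Proposition \ref{path condition}: since $B_0$ and $B_1$ are tight with in-degree $l$, they are disjoint from exactly the same $l$ root sets, say $V_1,\dots,V_l$; one takes edge-disjoint paths $P_j'$ from $V_j$ to $B_1$ in all of $D\bbslash\mathcal{B}$ supplied by that proposition, observes by counting against $\varrho_{D\bbslash\mathcal{B}}(B_0)=\varrho_{D\bbslash\mathcal{B}}(B_1)=l$ that each $P_j'$ enters $B_0$ exactly once and that the system uses every edge of $\mathsf{in}_{D\bbslash\mathcal{B}}(B_0)\cup\mathsf{in}_{D\bbslash\mathcal{B}}(B_1)$, and then truncates each $P_j'$ at its entry into $B_0$. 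You instead work locally inside $(D\bbslash\mathcal{B})[B_0]$: you isolate the sandwich fact $\varrho_{D\bbslash\mathcal{B}}(Y)\geq l$ for all $B_1\subseteq Y\subseteq B_0$ (antitonicity of $s$ plus tightness), contract $B_1$, attach a source over the heads $s_j$, check the cut identity, and invoke the edge version of Menger directly; the ``necessarily contains'' assertions you then prove by a direct count that applies to an arbitrary such path system. The paper's route is shorter because it recycles Proposition \ref{path condition} (itself a Menger argument on an auxiliary digraph) and gets the crossing information as a by-product; your route is self-contained at the level of the corollary, stays inside $B_0$, and has the small advantage of proving the second assertion for every admissible path system rather than only the constructed one, which is what the wording of the corollary actually claims. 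Both arguments ultimately rest on the same ingredients: the tightness bookkeeping via $s$ and Menger's theorem for finitely many edge-disjoint paths in a digraph of arbitrary size.
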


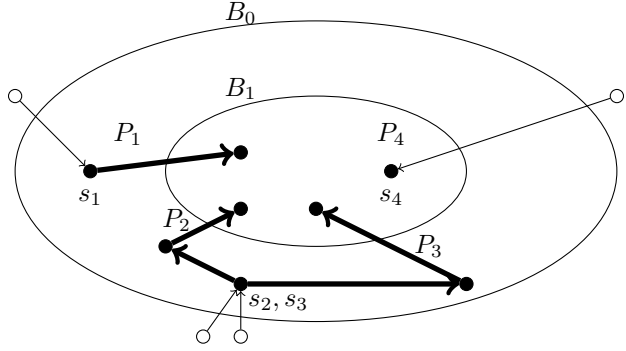
\begin{figure}[H]
\centering

\begin{tikzpicture}

\draw  (0,0) node  {} ellipse (2 and 1);
\draw  (0,0) node  {} ellipse (4 and 2);

\node at (-1,1.1) {$B_1$};
\node at (-1,2.1) {$B_0$};
\node at (1,0.5) {$ P_4 $};
\node at (-2.5,0.5) {$ P_1 $};
\node at (1,-0.35) {$ s_4 $};
\node at (-0.5,-1.75) {$ s_2,s_3 $};
\node at (-3,-0.35) {$ s_1 $};
\node at (1.5,-1) {$ P_3 $};
\node at (-1.85,-0.65) {$ P_2 $};

\node[circle, fill=black,inner sep=0pt,draw,minimum size=5] (v1) at (-3,0) {};
\node[circle, fill=black,inner sep=0pt,draw,minimum size=5] (v2) at (-1,-1.5) {};
\node[circle, fill=black,inner sep=0pt,draw,minimum size=5] (v3) at (-1,0.25) {};
\node[circle, fill=black,inner sep=0pt,draw,minimum size=5] (v4) at (1,0) {};
\node[circle, fill=black,inner sep=0pt,draw,minimum size=5] (v6) at (0,-0.5) {};
\node[circle,inner sep=0pt,draw,minimum size=5] (v7) at (-1,-2.2) {};
\node[circle, fill=black,inner sep=0pt,draw,minimum size=5] (v8) at (-1,-0.5) {};
\node[circle,inner sep=0pt,draw,minimum size=5] (v9) at (4,1) {};
\node[circle,inner sep=0pt,draw,minimum size=5] (v10) at (-4,1) {};
\node[circle,inner sep=0pt,draw,minimum size=5] (v11) at (-1.5,-2.2) {};
\node[circle, fill=black, inner sep=0pt,draw,minimum size=5] (v12) at (-2,-1) {};
\node[circle, fill=black,inner sep=0pt,draw,minimum size=5] (v13) at (2,-1.5) {};

\draw  (v9) edge[->] (v4);
\draw  (v10) edge[->] (v1);
\draw  (v1) edge[->,line width=2] (v3);
\draw  (v11) edge[->] (v2);
\draw  (v7) edge[->] (v2);
\draw  (v2) edge[->,line width=2] (v12);
\draw  (v12) edge[->,line width=2] (v8);
\draw  (v2) edge[->,line width=2] (v13);
\draw  (v13) edge[->,line width=2] (v6);

\end{tikzpicture}
\caption{Corollary \ref{végtelenutak} in the case $ l=4 $. We thickened the desired path system $ \{ P_j \}_{j=1}^{4} $. In 
this example, $ s_2=s_3 $ and $ P_4 $ consists of the vertex $ s_4 $. } 
\label{kétveszélyesfigura}
\end{figure}

\begin{sbiz}
$ B_0 $ is disjoint from exactly $ l $ many  of the sets $ V_i $  because $ B_0 $ is dangerous and $ \varrho_{D\bbslash 
\mathcal{B}}(B_0)=l 
$. Without loss of generality, we may assume that these sets are $ V_1,V_2,\dots,V_l $. By Proposition \ref{path condition}, 
there 
is a system 
of edge-disjoint paths $ \{ P_j' \} _{j=1}^{l}$ in $ D\bbslash \mathcal{B} $ such that $ P_j' $ goes from $ V_j $ to $ B_1 $. 
Note that such a 
path system necessarily contains all the edges in $ \mathsf{in}_{D\bbslash \mathcal{B}}(B_0)\cup \mathsf{in}_{D\bbslash 
\mathcal{B}}(B_1) $, and all the paths enter 
 $ B_0 $ exactly once. By deleting the initial segments of the  paths $ P_j' $ that are not in $ B_0 $, we get the desired 
path system.
\end{sbiz}

\subsection{Proof of the main Lemma}

Now we are able to to prove Lemma \ref{seged}.
\begin{nbiz}
Assume, seeking a contradiction, that Lemma \ref{seged} is false and $ v\in V\setminus V_0 $ witnesses this.
We will construct three sequences: $ \mathcal{B}_0^{n}=(V_0^{n},A_0^{n}), B_n, e_{n} \ (n\in \mathbb{N}) $. Let $ B_0=V,\ 
\mathcal{B}_0^{0}=\mathcal{B}_0 $ and let $ e_0 $ be an arbitrary edge. We will denote the $ k $-branching $ 
(\mathcal{B}_0^{n},\mathcal{B}_1,\dots,\mathcal{B}_{k-1}) $ by $ \mathcal{B}^{n} $.

\noindent Let $ Q $ be a path from 
$ V_0 $ to $ v $ in $ D\bbslash \mathcal{B} $ (such a path exists by Proposition \ref{elérhető}). Let $ u $ be the last vertex 
of $ Q $ 
for 
which there is 
a path  $ R $ from $ V_0 $ to $ u $ in $  D\bbslash \mathcal{B} $ such that   $ \mathcal{B}_0^{1}\defeq\mathcal{B}_0+R $ does 
not 
violate condition (\ref{egy feltetel}). Since $ u $ cannot be the last vertex of $ Q $, there is a unique outgoing edge $ e_1 $ 
of $ u $ 
which is in $ Q $ (see figure \ref{Bn-eképítésefigura}). The extension $ 
\mathcal{B}_0^{1}+e_1 $ violates condition (\ref{egy feltetel}) because of the choice of $ u $, and thus $ e_1\in 
\mathsf{in}_{D\bbslash\mathcal{B}^{1}}(B_1) $ where $ B_1 $ is a set which is dangerous with respect to $ \mathcal{B}^{1} $.

 Our 
plan is to continue by doing the same  but inside $ B_1 $.  Let $ 
Q_1 $ be an arbitrary path 
from $ V_0^{1}\cap B_1 $ to $ \mathsf{end}(e_1) $ in $ (D\bbslash\mathcal{B}^{1})[B_1] $ (such a path exists by Proposition 
\ref{elérhető}). 
Let $ u_1 $ 
be 
the last vertex of $ Q_1 $ 
for which there is 
a path  $ R_1 $ in $  (D\bbslash\mathcal{B}^{1})[B_1] $ from $ V_0^{1}\cap B_1 $ to $ u_1 $ such that   $ 
\mathcal{B}_0^{2}\defeq\mathcal{B}_0^{1}+R_1 $ does not violate condition (\ref{egy feltetel}). Since $ u_1\neq 
\mathsf{end}(e_1) $, there 
is a unique outgoing edge $ e_2 $ of $ u_1 $ which is in $ Q_1 $. The extension $ 
\mathcal{B}_0^{2}+e_2 $ violates condition (\ref{egy feltetel}) because of the choice of $ u_1 $, thus $ e_2\in 
\mathsf{in}_{D\bbslash\mathcal{B}^{2}}(B_2) $, where $ B_2\subsetneq B_1 $ is a set which is dangerous with respect to $ 
\mathcal{B}^{2} 
$ (if $ B_2 \not\subseteq B_1 $, then by Proposition \ref{metszetveszélyes}, we may replace $ B_2 $ with $ B_2\cap B_1 $).

\begin{figure}[H]
\centering

\begin{tikzpicture}

\node at (-4,1.7) {$V_0$};
\node at (3,2.2) {$B_1$};
\node at (3.4,1.7) {$B_2$};
\node  at (-0.5,1.5) {$Q$};
\node  at (-1.5,0) {$R$};
\node  at (2.5,-1.5) {$ R_1 $};
\node  at (3,0) {$ Q_1 $};
\node at (0.3,1.1) {$e_1$};
\node at (3.7,-2) {$e_2$};

\node[circle,inner sep=0pt,draw,minimum size=5] (v1) at (-3,1) {};
\node[circle,inner sep=0pt,draw,minimum size=5] (v2) at (-2,1) {};
\node[circle,inner sep=0pt,draw,minimum size=5] (v3) at (-1,1) {};
\node[circle,inner sep=0pt,draw,minimum size=5] (v4) at (0,1) {$u$};
\node[circle,inner sep=0pt,draw,minimum size=5] (v5) at (1,1) {};
\node[circle,inner sep=0pt,draw,minimum size=5] (v6) at (-4,0) {};
\node[circle,inner sep=0pt,draw,minimum size=5] (v7) at (2,1.7) {$v$};
\node[circle,inner sep=0pt,draw,minimum size=5] (v9) at (0.7,-3) {};
\node[circle,inner sep=0pt,draw,minimum size=5] (v10) at (4,-3) {$u_1$};
\node[circle,inner sep=0pt,draw,minimum size=5] (v11) at (3,-1) {};
\node[circle,inner sep=0pt,draw,minimum size=5] (v8) at (1,-2) {};
\node[circle,inner sep=0pt,draw,minimum size=5] (v12) at (2.7,-2) {};
\node  at (-0.5,1.5) {$Q$};

\draw  (-5,1.5) rectangle (-2.7, -2);
\draw  (0.5,2) rectangle (5,-3.3);
\draw  (1.3,1.5) rectangle (4,-2.5);

\draw  (v3) edge[->,line width=2] (v8);
\draw  (v8) edge[->,line width=2] (v4);
\draw  (v1) edge[->] (v2);
\draw  (v2) edge[->,line width=2] (v3);
\draw  (v3) edge[->] (v4);
\draw  (v4) edge[->] (v5);
\draw  (v6) edge[->,line width=2] (v2);
\draw  (v5) edge[->] (v7);
\draw  (v8) edge[->,dashed] (v9);
\draw  (v9) edge[->,dashed] (v10);
\draw  (v10) edge[->,dashed] (v11);
\draw  (v11) edge[->,dashed](v5);

\draw  (v8) edge[->,line width=3](v12);
\draw  (v12) edge[->,line width=3](v10);

\end{tikzpicture}
\caption{The process described above. The path $ Q $ is represented with a normal line, $ R $ with a thick line, $ Q_1 $ with a 
dashed line and $ R_1 $ with a very thick line. } 
\label{Bn-eképítésefigura}
\end{figure}
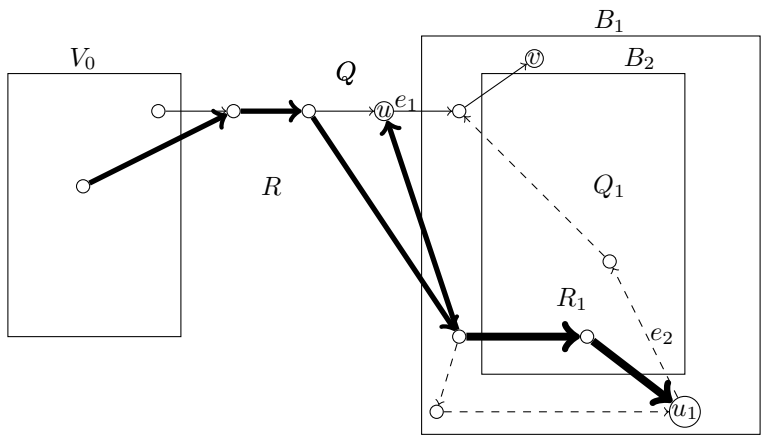

By 
continuing the process  recursively we get the desired sequences with the following properties:

 for all $ n\in \mathbb{N} $:
\begin{enumerate}
\item  $  B_{n+1}\subsetneq B_n $,
\item \begin{enumerate}
\item $ \mathcal{B}^{n}  $ satisfies condition (\ref{egy feltetel}),
\item the sets $ B_0,\dots, B_{n} $ are dangerous with respect to  $ \mathcal{B}^{n}$,
\item $  \mathsf{in}_{D\bbslash\mathcal{B}^{n}}(B_{n})=\mathsf{in}_{D\bbslash\mathcal{B}^{n+1}}(B_{n})  $,
\end{enumerate}     
\item $ e_{n+1}\in \mathsf{e}_{D\bbslash\mathcal{B}^{n+1}}(B_n\setminus B_{n+1},B_{n+1})  $, (and so the  edges $ 
e_{n+1} \ (n\in \mathbb{N}) $ are pairwise distinct).
\end{enumerate}

By throwing away the first finitely many elements of the sequences constructed above and reindexing them, we may assume that all 
the 
members 
of the monotone decreasing sequence $ B_n $ are disjoint from exactly the same, say $ l $ many, of sets among $ V_1,\dots, 
V_{k-1} $. 
Without loss of 
generality we may assume that these sets are $ V_1,V_2,\dots, V_l $. Note that $ l\geq 1 $ because $  B_n $ 
is dangerous with respect to $ \mathcal{B}^{n} $ and $ \varrho_{D\bbslash\mathcal{B}^{n}}(B_n)\geq 1 $ because $ e_n\in 
\mathsf{in}_{D\bbslash\mathcal{B}^{n}}(B_n) $.

 For $ n\in \mathbb{N} $, let  $ \{ P_j^{n} \}_{j=1}^{l} $ be a system of obtained by applying Corollary 
\ref{végtelenutak} with  $  B_{n+1} \subsetneq B_n $ and  $ \mathcal{B}^{n+1} $. Note that $ e_{n+1}\in 
\mathsf{e}_{D\bbslash\mathcal{B}^{n+1}}(B_n\setminus B_{n+1},B_{n+1})\subseteq\bigcup_{j=1}^{l}A(P_j^{n}) $. The multisets $ \{ 
\mathsf{end}(P_j^{n}) \}_{j=1}^{l} $ and $ \{ \mathsf{start}(P_j^{n+1}) \}_{j=1}^{l} $ are equal (they are  $ \{ 
\mathsf{end}(e): e \in\mathsf{in}_{D\bbslash\mathcal{B}^{n}}(B_{n})=\mathsf{in}_{D\bbslash\mathcal{B}^{n+1}}(B_{n}) \} $), so 
we can 
concatenate the 
path systems $ \{ P_j^{n} \}_{j=1}^{l} $  and  $ \{ P_j^{n+1} \}_{j=1}^{l} $ for all $ n $. Thus, we obtain a
system of edge-disjoint paths $ \{ P_j \}_{j=1}^{l} $ in $ D\bbslash \mathcal{B} $ (see figure \ref{vegtelenútfigura} ) 
such that $ \{ e_n \}_{n=1}^{\infty}\subseteq \bigcup_{j=1}^{l}A(P_j) $, and therefore at least one of them is forward-infinite, 
which contradicts  the conditions of Theorem \ref{főtétel}. \rule{1.5ex}{1.5ex} \end{nbiz}

\begin{figure}[H]\label{végtelenútakfigura}
\centering

\begin{tikzpicture}

\draw (-6,-2) -- (6,-2) -- (6,2) -- (-6,2) -- (-6,-2);
\draw (-4,-1.5) -- (5.5,-1.5) -- (5.5,1.5) -- (-4,1.5) -- (-4,-1.5);
\draw (-2,-1) -- (5,-1) -- (5,1) -- (-2,1) -- (-2,-1);
\draw (0,-0.5) -- (4.5,-0.5) -- (4.5,0.5) -- (0,0.5) -- (0,-0.5);
\draw (1.5,-0.3) -- (4,-0.3) -- (4,0.3) -- (1.5,0.3) -- (1.5,-0.3);

\node at (-3.7,-0.2) {$e_1$};
\node at (-1,0.6) {$e_2$};
\node at (-0.6,-0.4) {$e_3$};
\node at (1,0.2) {$e_4$};

\node at (-3.7,1) {$P_1$};
\node at (-4.2,-1.5) {$P_3$};
\node at (-4.2,-0.2) {$P_2$};

\node at (-6,2.2) {$B_0$};
\node at (-4,1.7) {$B_1$};
\node at (-2,1.2) {$B_2$};
\node at (0,0.7) {$B_3$};

\node[circle,inner sep=0pt,draw,minimum size=5] (v1) at (-7,1.5) {};
\node[circle,inner sep=0pt,draw,minimum size=5] (v2) at (-7,-0.5) {};
\node[circle,inner sep=0pt,draw,minimum size=5] (v3) at (-3,1) {};
\node[circle,inner sep=0pt,draw,minimum size=5] (v4) at (-3,0) {};
\node[circle,inner sep=0pt,draw,minimum size=5] (v5) at (-3,-1) {};
\node[circle,inner sep=0pt,draw,minimum size=5] (v6) at (-5,0) {};
\node[circle,inner sep=0pt,draw,minimum size=5] (v7) at (-5,-1) {};
\node[circle,inner sep=0pt,draw,minimum size=5] (v8) at (-1,0) {};
\node[circle,inner sep=0pt,draw,minimum size=5] (v9) at (-1,-0.7) {};
\node[circle,inner sep=0pt,draw,minimum size=5] (v10) at (0.5,0) {};
\node (v11) at (2,0.2) {};
\node (v12) at (2,-0.2) {};

\draw  (v2) edge[->,dotted] (v7);
\draw  (v2) edge[->,dotted] (v6);
\draw  (v1) edge[->,dotted] (v6);
\draw  (v6) edge[->] (v3);
\draw  (v6) edge[->] (v4);
\draw  (v7) edge[->] (v5);
\draw  (v4) edge[->] (v8);
\draw  (v5) edge[->] (v9);
\draw  (v3) edge[->] (v10);
\draw  (v8) edge[->] (v10);
\draw  (v10) edge[->] (v11);
\draw  (v10) edge[->] (v12);
\draw  (v9) edge[->] (v12);

\end{tikzpicture}
\caption{The (initial segment of) path system $ \{ P_j \}_{j=1}^{l} $ in the case $ l=3 $. } 
\label{vegtelenútfigura}
\end{figure}
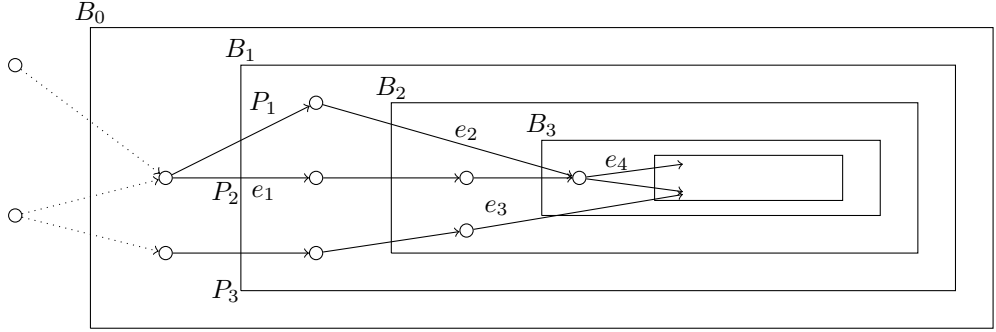

\subsection{Proof of the Theorem }

Now, we continue the proof of Theorem \ref{főtétel}. If $ v\in V $, then by Lemma \ref{seged}, we can 
extend the branchings,  without  violating condition (\ref{egy 
feltetel}), with finitely 
many new vertices and edges such that all of these 
extensions contain $ v $. In the countable case, we can construct the 
desired spanning branchings by the following recursion. In the $ n $-th step, do the extensions above with the 
branchings after the 
previous 
step and with the next vertex $ v_n $ where $ V=\left\lbrace v_n \right\rbrace_{n=0}^{\infty} $. In the uncountable case, we 
have 
to be more careful because we can not avoid limit steps, and we
need to assure that we do not violate condition (\ref{egy 
feltetel}) in these steps as well. The easy trick to handle this is that if we extend in one step one of the branchings with some 
vertex $ v $, then before the next limit step  we put $ v $ into all the branchings which missed it. 
 
Let us make this precise. Let $ V\defeq\left\lbrace 
v_{\alpha} 
\right\rbrace_{\alpha<\lambda} $, where $ \lambda=\left|V\right| $. We extend the branchings by transfinite recursion on $ 
\lambda $.  Denote by $ \mathcal{B}_i^{\alpha}=(V_i^{\alpha},A_i^{\alpha}) $   the branching which we get from $ 
\mathcal{B}_i $  after  the $ \alpha $-th step,  and let $ 
\mathcal{B}^{\alpha}=(\mathcal{B}_0^{\alpha},\mathcal{B}_1^{\alpha},\dots,\mathcal{B}_{k-1}^{\alpha}) 
$ for $   \alpha\leq \lambda $.\\

\noindent Let $ \mathcal{B}_i^{0}=\mathcal{B}_i\ (i<k) $. 

\noindent If $ \alpha<\lambda $ is a limit ordinal, then let $ \mathcal{B}_i^{\alpha}=
(\bigcup_{\beta<\alpha}V_i^{\beta},\bigcup_{\beta<\alpha}A_i^{\beta})$.

\noindent If $ \alpha=\beta+1 $ where $ \beta<\lambda $ is a limit ordinal and  $\mathcal{B}^{\beta} $ satisfies condition 
(\ref{egy 
feltetel}), then  add $ v_\beta $ to all of the  branchings $ \{ \mathcal{B}_i^{\beta}  \}_{i<k} $   by using Lemma \ref{seged} 
repeatedly. Denote the resulting $ k $-branching by $ \mathcal{B}^{\alpha} $.

\noindent If $ \alpha=\beta+2 $ where $ \beta<\lambda $ is an arbitrary ordinal and   $ \mathcal{B}^{\beta+1}$ 
 satisfies condition (\ref{egy 
feltetel}) and the set  $N^{\beta}\defeq\left\lbrace v_{\beta+1 }
\right\rbrace\cup\bigcup_{i<k}V_i^{\beta+1}\setminus 
V_i^{\beta}  $ is finite, then add the elements of $ N^{\beta} $ one by one to all of the branchings $ \{ 
\mathcal{B}_i^{\beta+1} 
\}_{i<k} $   by using Lemma \ref{seged} repeatedly. Denote the resulting $ k $-branching   $ 
\mathcal{B}^{\alpha} $.  

\begin{prop}\label{limeszlépés}
The transfinite recursion above does not stop before the $ \lambda $-th step. 
\end{prop}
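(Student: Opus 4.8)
The recursion is defined case by case on ordinals, and it continues to step $\alpha$ only if the hypotheses of the applicable case are met at all earlier stages. So "the recursion does not stop before $\lambda$" means: for every $\alpha<\lambda$, the $k$-branching $\mathcal{B}^{\alpha}$ satisfies condition (\ref{egy feltetel}), and moreover whenever $\alpha=\beta+2$ the finiteness of $N^{\beta}$ holds. I would prove this by transfinite induction on $\alpha$, splitting into the three cases (successor of a limit, successor of an arbitrary ordinal, limit) that mirror the definition. The successor cases are essentially immediate from Lemma \ref{seged}: each application of the lemma preserves condition (\ref{egy feltetel}) and changes a branching by finitely many vertices and edges, and in the $\beta+1$ case we only need to add one vertex $v_{\beta}$ to each of the $k$ branchings, while in the $\beta+2$ case $N^{\beta}$ is finite by induction hypothesis (it records exactly the finitely many vertices that got added during the single step $\beta\mapsto\beta+1$), so finitely many applications of Lemma \ref{seged} suffice and condition (\ref{egy feltetel}) survives all of them.

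**The core difficulty: limit steps.** The real content is the limit case: if $\alpha<\lambda$ is a limit ordinal and every $\mathcal{B}^{\beta}$ with $\beta<\alpha$ satisfies (\ref{egy feltetel}), then so does $\mathcal{B}^{\alpha}=(\bigcup_{\beta<\alpha}V_i^{\beta},\bigcup_{\beta<\alpha}A_i^{\beta})_{i<k}$. Two things must be checked. First, each $\mathcal{B}_i^{\alpha}$ is genuinely a branching with root set $V_i$: the edge sets form an increasing chain, so their union is a forest-like object, and since no $V_i^{\beta}\setminus V_i$ vertex is ever a root, the union has the same root set $V_i$; a vertex outside $V_i$ has in-degree exactly $1$ in $\mathcal{B}_i^{\alpha}$ because it did so from the stage it first appeared onward (the branchings only grow, and an existing vertex never receives a new incoming edge in a later step — this is where the "before the next limit step we fill in all missing vertices" trick matters, so that at a limit every vertex present is already fully incorporated). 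Second, condition (\ref{egy feltetel}) itself: fix $\varnothing\neq X\subseteq V$ with $\varrho_{D\bbslash\mathcal{B}^{\alpha}}(X)$ finite, say equal to $m$. Then $\mathsf{in}_{D\bbslash\mathcal{B}^{\alpha}}(X)$ is a finite set of edges; since the removed edge sets $\bigcup_{i<k}A_i^{\beta}$ increase with $\beta$, there is a single $\beta<\alpha$ with $\mathsf{in}_{D\bbslash\mathcal{B}^{\beta}}(X)=\mathsf{in}_{D\bbslash\mathcal{B}^{\alpha}}(X)$, hence $\varrho_{D\bbslash\mathcal{B}^{\beta}}(X)=m$. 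Also $V_i\cap X=\varnothing$ is a condition on $V_i$, which never changes, so $\left|\{i<k:V_i\cap X=\varnothing\}\right|$ is the same at stage $\beta$ and stage $\alpha$; by the induction hypothesis applied to $\mathcal{B}^{\beta}$ we get $m\geq\left|\{i<k:V_i\cap X=\varnothing\}\right|$, as desired. For $\varrho_{D\bbslash\mathcal{B}^{\alpha}}(X)$ infinite there is nothing to prove.

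**What I'd watch out for.** The one subtle point is making sure the branching structure is not destroyed at limits — specifically that a vertex doesn't end up with in-degree $2$ in some $\mathcal{B}_i^{\alpha}$ because it was touched at two different successor stages. This is exactly prevented by the design of the recursion: in a $\beta+2$ step we add the new vertices $N^{\beta}$ to the branchings \emph{that missed them} (Lemma \ref{seged} attaches a path from $V_i$ to the new vertex, giving the new vertex in-degree $1$ in $\mathcal{B}_i$ and leaving all old vertices' in-degrees untouched, since a path from $F$ to $v$ with $V(F)\cap V(P)=\{\mathsf{start}(P)\}$ only adds edges into fresh vertices). So every vertex that appears before stage $\alpha$ already lies in all $k$ branchings by the stage just after it appeared, and its in-degree in each is settled from that point on. Once this bookkeeping is written out, the induction closes and the recursion runs through all $\lambda$ steps.
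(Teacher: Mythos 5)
Your successor-step analysis and your observation that the ``catch-up'' rule keeps every in-degree at most $1$ at limits are fine, but the core of the proposition --- condition (\ref{egy feltetel}) at a limit stage $\alpha$ --- is exactly where your argument has a genuine gap. You claim that if $\varrho_{D\bbslash\mathcal{B}^{\alpha}}(X)$ is finite then, because the sets $\bigcup_{i<k}A_i^{\beta}$ increase, there is a single $\beta<\alpha$ with $\mathsf{in}_{D\bbslash\mathcal{B}^{\beta}}(X)=\mathsf{in}_{D\bbslash\mathcal{B}^{\alpha}}(X)$. This does not follow: a decreasing transfinite chain of edge sets whose intersection is finite need not be eventually constant. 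Nothing prevents infinitely many in-edges of $X$ from being absorbed into the branchings at successor stages cofinal in $\alpha$ (each application of Lemma \ref{seged} may route its path through $X$ via a fresh in-edge), in which case $\varrho_{D\bbslash\mathcal{B}^{\beta}}(X)=\aleph_0$ for every $\beta<\alpha$ while $\varrho_{D\bbslash\mathcal{B}^{\alpha}}(X)$ is finite or even $0$; no stage before $\alpha$ witnesses the limit value, and your induction hypothesis gives you nothing about it. A second, related slip: the condition to be verified at stage $\alpha$ compares $\varrho_{D\bbslash\mathcal{B}^{\alpha}}(X)$ with $\left|\{i<k: V_i^{\alpha}\cap X=\varnothing\}\right|$, computed from the \emph{grown} vertex sets, not from the original root sets $V_i$ as you assert (``which never changes''); with the original $V_i$ the inequality you are trying to prove can actually be false in the scenario above, so no repair of the stabilization argument can work.

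The paper's proof avoids this by a dichotomy on the fixed set $X$ at the first (necessarily limit) stage $\beta$ of failure: either $\mathsf{in}_{D\bbslash\mathcal{B}^{\beta}}(X)=\mathsf{in}_{D\bbslash\mathcal{B}}(X)$, in which case the original condition (\ref{egy feltetel}) for $\mathcal{B}$ already gives the bound because $\left|\{i: V_i^{\beta}\cap X=\varnothing\}\right|\leq\left|\{i: V_i\cap X=\varnothing\}\right|$; or some $e\in\mathsf{in}_{D\bbslash\mathcal{B}}(X)$ lies in $A_{i_0}^{\gamma}$ for a least (successor) $\gamma<\beta$, and then the very catch-up rule you invoked only for in-degrees forces $\mathsf{end}(e)\in V_i^{\gamma+1}\subseteq V_i^{\beta}$ for all $i<k$, so $X$ meets every $V_i^{\beta}$ and the required number of in-edges is $0$. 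That is the step your proposal is missing: the catch-up trick is what makes the limit case of condition (\ref{egy feltetel}) go through, not a stabilization of the in-edge sets.
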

\begin{sbiz}
Suppose, seeking a contradiction, that it does. The limit steps are well defined. At successor steps, we do not violate 
condition 
(\ref{egy 
feltetel}), and we have extended the branchings 
with only 
finitely many new vertices and edges. Thus if $ \mathcal{B}^{\gamma+1} $ is well defined for some $ \gamma<\lambda $, then so is 
$ \mathcal{B}^{\gamma+2} $. Hence the first step  
where the recursion can not be continued is necessarily a successor of a 
limit ordinal $ \beta $. 
But  then  $ \beta $ is the first ordinal such that  $ \mathcal{B}^{\beta} $  violates 
condition (\ref{egy 
feltetel}).
Consider the function $ s_\beta(X)=| \{ i<k: V_i^{\beta}\cap X=\varnothing \}|
$, and fix an arbitrary $ \varnothing \neq X\subseteq V $. If $ 
\mathsf{in}_{D\bbslash \mathcal{B}^{\beta}}(X)=\mathsf{in}_{D\bbslash\mathcal{B}}(X) $, then 
$\varrho_{D\bbslash\mathcal{B}^{\beta}}(X)= 
\varrho_{D\bbslash \mathcal{B}}(X) \geq 
s_0(Y)\geq s_{\beta}(X)  $. Otherwise there exists an $ e\in \mathsf{in}_{D\bbslash\mathcal{B}}(X) \setminus  
\mathsf{in}_{D\bbslash\mathcal{B}^{\beta}}(X) $, and so there is 
an $ i_0<k $ 
such 
that $ e\in A_{i_0}^{\beta} $. Let $\gamma<\beta $ be the smallest ordinal such that $ e\in A_{i_0}^{\gamma} $. Then $ \gamma $ 
is  a 
successor ordinal, so by the recursion we have $ \mathsf{end}(e)\in V_i^{\gamma+1}\ (i<k) $. Thus  $ 
\mathsf{end}(e)\in 
V_i^{\beta}\ (i<k) $, and therefore $ \varrho_{D\bbslash\mathcal{B}^{\beta}}(Y)\geq 0=s_{\beta}(X) $. Hence no set $ X $ 
violates condition (\ref{egy feltetel}) with respect to
 $ \mathcal{B}^{\beta} $,  which is a contradiction. 
\end{sbiz}\\

\noindent Finally,   $  \mathcal{B}_i^{\lambda}\ (i=0,\dots,k-1) $  are the desired 
spanning branchings.
\end{proof}

\section{A conjecture about packing infinitely many branchings}
\noindent We show, by a counterexample, that the finiteness of the number of initial branchings is necessary in Theorem 
\ref{főtétel}, and formulate a conjecture with a very natural condition about paths (which is a strengthening of the 
condition  (\ref{egy feltetel})) motivated by this counterexample.
  
We construct a digraph $ D $  and a system of edge-disjoint branchings $ \mathcal{B}=\left\langle \mathcal{B}_n: n\in 
\mathbb{N}   
\right\rangle $  of $ D $  such that  $ D\bbslash \mathcal{B} 
$ 
does not 
contain infinite paths 
and the system satisfies condition (\ref{egy 
feltetel}), but the desired extensions of the branchings do not exist. Let $ D=(V,A) $ where $ 
V=\{ r_n 
\}_{n\in 
\mathbb{N}}\cup \{ v \},\ \left| \mathsf{e}_D(r_0,r_n)\right|=\aleph_0\ (n\in \mathbb{N}^{+}),\ \left| 
\mathsf{e}_D(r_n,v)\right|=1\ 
(n\in 
\mathbb{N}^{+}),\ \left| \mathsf{e}_{D}(v,r_0)\right|=\aleph_0 $ and $ \mathcal{B}_n=(\{ r_n \},\varnothing)\ (n\in \mathbb{N}) 
$ (see figure \ref{ellenpelda}).  We show that the system above does not violate condition (\ref{egy 
feltetel}). Let 
$ \varnothing \neq X \subseteq V $ be arbitrary. Assume first that $ r_0\notin X $. If $ r_n\in X $, for some $ n\in 
\mathbb{N}^{+} 
$, 
then 
$ \varrho_{D\bbslash \mathcal{B}}(X)=\aleph_0 $; if not, then $ X=\{ v \}  $ and $ \varrho_{D\bbslash \mathcal{B}}(X)=\aleph_0 $ 
again. 
Assume $ r_0\in X $. If $ 
v\notin X $, then $ \varrho_{D\bbslash \mathcal{B}}(X)=\aleph_0 $. If $ v\in X $, then
    each element of $ \{ r_n \}_{n\in \mathbb{N}^{+}}\setminus X $ has one outgoing edge to $ X $, then there is  
equality in condition (\ref{egy 
feltetel}). 
 Otherwise, there obviously is no system of 
edge-disjoint paths $ \{ P_n \}_{n\in \mathbb{N}} $ such that $ P_n $ goes from $ r_n $ to $ v $, and thus we can not 
extend  the branchings $ \mathcal{B}_n $ to edge-disjoint spanning branchings.\\

\begin{figure}[H]
\centering
\begin{tikzpicture}
 \tikzstyle{green_circle} = [  font={\huge\bfseries}, shape=circle, minimum size=0.5cm, circular drop shadow, text=black, 
      very thick, draw=black!55, top color=white,bottom color=green!80, text width=0.5cm, align=center]
    
      \tikzstyle{green_circle} = [  font={\huge\bfseries}, shape=circle, minimum size=0.5cm, circular drop shadow, text=black, 
      very thick, draw=black!55, top color=white,bottom color=green!80, text width=0.5cm, align=center]
    
\node (v8) at (-4,-1.2) {$v$};
\node (v5) at (-8,-1) {$r_3$};
\node (v1) at (-12,-1.2) {$r_0$};
\node (v4) at (-8,0) {$r_2$};
\node (v3) at (-8,1) {$r_1$};

\node (v7) at (-8,-2) {$r_n$};

\draw  (v1) edge[->, line width=2] (v3);
\draw  (v1) edge[->, line width=2] (v4);
\draw  (v1) edge[->, line width=2] (v5);

\draw  (v1) edge[->, line width=2] (v7);

\draw  (v3) edge[->] (v8);
\draw  (v4) edge[->] (v8);
\draw  (v5) edge[->] (v8);

\draw  (v7) edge[->] (v8);
\draw[->, line width=2] (-4,-1) .. controls (-5,2) and (-6,2) .. (-8,2) .. controls (-10,2) and (-11,2) .. (-12,-1);

\draw[dotted]  (v5) edge (v7);
\node [outer sep=0,inner sep=0,minimum size=0] (v9) at (-8,-3) {};
\draw[dotted]  (v7) edge (v9);
\end{tikzpicture}
\caption{The counterexample. (Thick arrows stand for countably infinite parallel edges.)} \label{ellenpelda}
\end{figure}
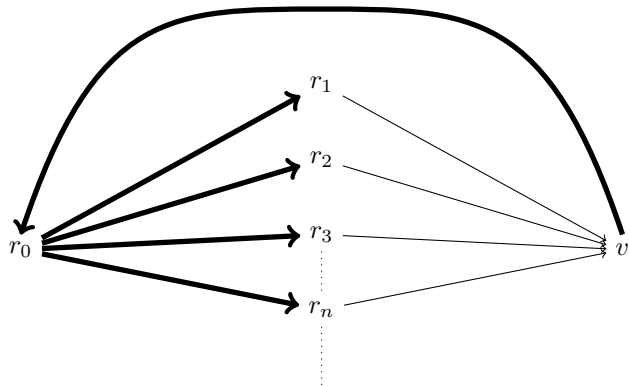     

\begin{conj}
Assume that  $ D=(V,A) $ is a digraph,  $ \kappa $ is an infinite cardinal and $ \mathcal{B}_i=(V_i,A_i)\ (i< \kappa) $ are
 edge-disjoint branchings in $ D $. Let  $D\bbslash \mathcal{B}=(V, A \setminus\cup_{i< \kappa}A_i) $. Suppose $ 
 D\bbslash \mathcal{B} $ does not contain 
 forward-infinite paths, and for all $ v\in V $ there is a system of edge-disjoint paths $ \left\lbrace P_i 
 \right\rbrace_{i<\kappa} $ in $ D\bbslash \mathcal{B} $ such that $ P_i $ goes from $ V_i $ to $ v $. Then the branchings 
 $ 
 \mathcal{B}_i $ can be extended to edge-disjoint spanning branchings of $ D $ without 
 changing their root sets. 
\end{conj}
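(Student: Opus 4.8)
The plan is to rerun the proof of Theorem~\ref{főtétel} with the \emph{path condition} from the hypothesis --- for every $v$ there is an edge-disjoint family $\{P_i\}_{i<\kappa}$ of $V_i$-to-$v$ paths in $D\bbslash\mathcal B$ --- playing the role that condition~(\ref{egy feltetel}) plays there. This substitution is forced: the counterexample above shows the cut condition is genuinely too weak for infinitely many branchings, while by Proposition~\ref{path condition} the path condition is exactly what~(\ref{egy feltetel}) amounts to in the finite case, and (crucially) it is ``additive'' enough to survive limit steps.

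First I would redevelop the basic tools (the analogues of Propositions~\ref{metszetveszélyes} and~\ref{elérhető} and Corollary~\ref{végtelenutak}) in this language. Call $X$ \textbf{dangerous} at $w$ if $w\in X$, $X\cap V_0\neq\varnothing$, and some edge-disjoint $V_i$-to-$w$ family in $D\bbslash\mathcal B$ uses every edge of $\mathsf{in}_{D\bbslash\mathcal B}(X)$; $X$ is dangerous if it is dangerous at some $w$. The analogue of Proposition~\ref{metszetveszélyes} (the intersection of two dangerous sets meeting $V_0$ is dangerous) should now be proved by an uncrossing/exchange argument on path families, since submodularity of $\varrho$ is useless once $\varrho(X)$ and the index count can both be $\kappa$. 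Reachability inside a dangerous set (Proposition~\ref{elérhető}) and the nested-sets routing statement (Corollary~\ref{végtelenutak}) then carry over with their original proofs, invoking the infinite Menger theorem where Proposition~\ref{path condition} was used, and one checks as before that for $e\in\mathsf{out}_{D\bbslash\mathcal B}(V_0)$ the extension $\mathcal B_0+e$ destroys the path condition precisely when $e$ enters a dangerous set.

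Granting these tools, the analogue of Lemma~\ref{seged} is attacked exactly as before: assuming a bad $v\in V\setminus V_0$, produce the strictly descending chain of dangerous sets $B_0\supsetneq B_1\supsetneq\cdots$ and the distinct edges $e_n\in\mathsf e_{D\bbslash\mathcal B}(B_{n-1}\setminus B_n,B_n)$ with the bookkeeping properties 1--3, pass to a refinement along which $\varrho_{D\bbslash\mathcal B^n}(B_n)$ equals a fixed finite $l$, apply Corollary~\ref{végtelenutak} at each level, concatenate the resulting $l$-tuples of paths (endpoints matching the successors' starting points), and extract a forward-infinite path, contradicting the hypothesis. The step I expect to be the real obstacle is producing that refinement: in the finite case the sets $\{i<k:V_i\cap B_n=\varnothing\}$ form an increasing sequence inside the finite index set $k$, hence stabilize after deleting finitely many terms, and this stabilization is exactly what makes Corollary~\ref{végtelenutak} (which needs $\varrho(B_n)=\varrho(B_{n+1})$) applicable; with $\kappa$ branchings this sequence of index sets can grow without bound, so one must either choose $B_{n+1}$ more carefully --- e.g.\ as a minimal dangerous set squeezed between $\mathsf{end}(e_{n+1})$ and $B_n$, forcing stabilization through the uncrossing lemma --- or replace the ``one forward-infinite path by pigeonhole'' step by a direct construction of a forward-infinite path out of nested path systems of unbounded width. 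Carrying this out while retaining properties 1--3 and the control on $\varrho$ is the heart of the matter.

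For the global construction I would copy the transfinite recursion of the proof of Theorem~\ref{főtétel}, with one extra complication: the device ``before the next limit step, put the newly added vertex into every branching'' no longer fits into a single successor step when there are $\kappa$ branchings, and completing all branchings spawns new vertices that must be completed too. I would schedule this diagonally: between consecutive limit ordinals, run a block of successor steps (of order type $\kappa$, grouped into $\omega$ rounds) which, by repeated use of the main lemma, guarantees that by the next limit every vertex introduced so far lies in $V(\mathcal B_i)$ for all $i$ --- the closure terminating after $\omega$ rounds because the vertices created in round $n$ are absorbed in round $n+1$. Granting this invariant, the limit step works as in Proposition~\ref{limeszlépés}: if the path condition failed at a limit $\gamma$ for some $w$, take a $V_i$-to-$w$ family in $D\bbslash\mathcal B$ and truncate each path at its last edge that has by then been moved into some branching; the truncations are edge-disjoint paths in $D\bbslash\mathcal B^\gamma$ whose starting vertices lie in the corresponding $V_i^\gamma$ by the invariant, a contradiction. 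The loose end here is the minor limit ordinals falling \emph{inside} such a block, where the closure is not yet complete; I would expect to settle this by invoking the main lemma only at block boundaries (doing the intra-block work ``continuously'') or by a slightly stronger invariant on which edges may have been moved by a given minor limit --- a secondary obstacle that should cost some care but not a new idea.
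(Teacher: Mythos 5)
This statement is the paper's concluding \emph{conjecture}: the author gives no proof of it (and the Remark after the Corollary explicitly restricts the method of Theorem~\ref{főtétel} to the case where every vertex misses only finitely many of the initial branchings), so there is no proof of the paper to compare yours against. What you have written is a programme rather than a proof, and the places you yourself flag as ``the heart of the matter'' are precisely the points where the paper's machinery breaks down and where a genuinely new idea would be needed. Concretely: (i) the whole tight/dangerous-set calculus in Propositions~\ref{metszetveszélyes}--\ref{path condition} rests on the function $p(X)=\varrho_{D\bbslash\mathcal B}(X)-s(X)$ with $s(X)=|\{i:V_i\cap X=\varnothing\}|$ finite; with $\kappa$ branchings $s(X)$ can be infinite, tightness loses its meaning, and your substitute notion of ``dangerous at $w$'' via path families is left with no proof of the uncrossing statement, nor of the claimed equivalence ``$\mathcal B_0+e$ destroys the path condition iff $e$ enters a dangerous set''; an appeal to ``an uncrossing/exchange argument on path families'' and to the infinite Menger theorem is not an argument, and it is exactly the open content of the conjecture.

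(ii) In your analogue of Lemma~\ref{seged}, the step that makes the contradiction work in the paper is that the index sets $\{i<k:V_i\cap B_n=\varnothing\}$ grow inside a \emph{finite} set, hence stabilize at some finite $l\geq 1$, Corollary~\ref{végtelenutak} applies with constant width $l$, and the infinitely many distinct edges $e_n$ lie in the union of only $l$ paths, so one path is forward-infinite by pigeonhole. With $\kappa$ branchings the width need not stabilize and may be infinite, and then infinitely many $e_n$ spread over infinitely many paths force nothing; your two suggested repairs (a more careful choice of $B_{n+1}$, or a ``direct construction'' of an infinite path from nested systems of unbounded width) are named but not carried out. (iii) The transfinite recursion has the same problem in a different guise: the paper's device of inserting each new vertex into \emph{all} branchings before the next limit is what makes Proposition~\ref{limeszlépés} go through, and it is feasible only because each vertex is missing from finitely many branchings; with $\kappa$ branchings each insertion block has length $\kappa$, every application of the main lemma introduces new vertices needing $\kappa$ further insertions, and your claim that ``$\omega$ rounds'' close this off, as well as the treatment of limit ordinals interior to a block, is asserted rather than proved, while the limit-step verification depends on exactly that unproven invariant. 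So the proposal does not prove the conjecture; it reformulates the difficulties the author's counterexample and remark already isolate, and leaves each of them open.
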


\end{document}